\documentclass[12pt, reqno]{amsart}
\usepackage{amsmath, amsthm, amscd, amsfonts, amssymb, mathtools, color, hyperref}
\usepackage[dvipsnames]{xcolor}

\textheight 22.5truecm \textwidth 14.5truecm
\setlength{\oddsidemargin}{0.35in}\setlength{\evensidemargin}{0.35in}

\setlength{\topmargin}{-.5cm}

\newtheorem{theorem}{Theorem}[section]

\theoremstyle{definition}

\theoremstyle{remark}
\newtheorem{remark}[theorem]{Remark}
\numberwithin{equation}{section}

\begin{document}
\setcounter{page}{1}

\title[Hoffman-Wielandt type inequality]
{Hoffman-Wielandt type inequality for block companion matrices of certain 
matrix polynomials}
	
\author[Pallavi, Shrinath and Sachindranath]{Pallavi. B, Shrinath Hadimani and 
Sachindranath Jayaraman}
\address{School of Mathematics\\ 
Indian Institute of Science Education and Research Thiruvananthapuram\\ 
Maruthamala P.O., Vithura, Thiruvananthapuram -- 695 551, Kerala, India.}
\email{(pallavipoorna20, srinathsh3320, sachindranathj)@iisertvm.ac.in, 
sachindranathj@gmail.com}

\subjclass[2010]{15A42, 15A20, 15A22, 47A56}
	
\keywords{Matrix polynomials with unitary/doubly stochastic coefficients; Eigenvalue 
bounds for matrix polynomials with doubly stochastic coefficients; Diagonalizability 
of block companion matrix; Hoffman-Wielandt type inequality}

\begin{abstract} 
Matrix polynomials with unitary/doubly stochastic coefficients form the subject 
matter of this manuscript. We prove that if $P(\lambda)$ is a  
quadratic matrix polynomial whose coefficients are either unitary matrices 
or doubly stochastic matrices, then under certain conditions on these coefficients, 
the corresponding block companion matrix $C$ is diagonalizable. Consequently, if 
$Q(\lambda)$ is another quadratic matrix polynomial with corresponding block companion 
matrix $D$, then a Hoffman-Wielandt type inequality holds for the block companion matrices 
$C$ and $D$. 
\end{abstract}
	
\maketitle
	
\section{Introduction}\label{sec-1}

\medskip

We work either over the field $\mathbb{C}$ of complex numbers or over the field 
$\mathbb{R}$ of real numbers. The vector space of $n \times n$ matrices over 
$\mathbb{C}$ (resp., $\mathbb{R}$) is denoted by $M_n(\mathbb{C})$ (resp., 
$M_n(\mathbb{R})$). The notations $||\cdot||_2$ and $||\cdot||_F$ will denote 
respectively the spectral norm and the Frobenius norm of a square matrix. Any other 
notation used in the manuscript will be introduced as and when needed. 
	
\medskip
If $A$ and $B$ are two $n \times n$ commuting normal matrices with eigenvalues 
$\lambda_1, \ldots, \lambda_n$ and $\mu_1, \ldots, \mu_n$, respectively, 
then 
\begin{equation}\label{eqn-1}
\displaystyle \sum_{i=1}^{n} |\lambda_i - \mu_i|^2 = ||A-B||^2_F.
\end{equation} 
What happens when the matrices do not commute has given rise to several 
interesting questions. One of the classical and well known inequality in matrix analysis 
is the Hoffman-Wielandt inequality which says that if $A$ and $B$ are two $n \times n$ 
normal matrices with eigenvalues $\lambda_1, \ldots, \lambda_n$ and 
$\mu_1, \ldots, \mu_n$, respectively given in some order, then there exists a permutation 
$\pi$ of $\{1, \ldots, n\}$ such that 
\begin{equation}\label{HW-ineq-1}
\displaystyle \sum_{i=1}^{n} |\lambda_i - \mu_{\pi(i)}|^2 \leq ||A-B||^2_F 
\end{equation}
(see for instance Theorem $6.3.5$, \cite{Horn-Johnson}). The Hoffman-Wielandt 
inequality itself is a particular case of a more general problem in matrix analysis 
called the Wielandt-Mirsky conjecture (see for instance \cite{Cong-Trinh Le}). 
Possible generalizations of the Hoffman-Wielandt inequality by allowing one or both 
of the matrices to be non-normal have been studied in the literature. The most general 
form assumes that one of the matrices is diagonalizable and the other one arbitrary. 
A brief description of these may be found in \cite{Ikramov-Nesterenko}. As in 
\cite{Ikramov-Nesterenko}, we call the general form as the Hoffman-Wielandt type inequality, 
which is stated below (see Theorem \ref{starting theorem}). Other references on eigenvalue perturbation results are \cite{Bhatia, Bhatia-Elsner, Elsner, Elsner-Friedland} and the 
references cited therein. Notice that the block companion matrix of a matrix polynomial (see 
Section \ref{sec-2} for the definition) whose coefficients are normal matrices need not be a 
normal matrix. It turns out that the inequality \eqref{HW-ineq-1} fails to hold in general 
for block companion matrices of matrix polynomials, whose coefficients are normal matrices. 
More on this will be elaborated in Section \ref{sec-3}. This motivates us to consider the 
Hoffman-Wielandt type inequality for the block companion matrices of matrix polynomials 
in this manuscript.
	
\begin{theorem}\label{starting theorem}
(Theorem $4$, \cite{Ikramov-Nesterenko}) Let $A$ be a diagonalizable matrix of 
order $n$ and $B$ be an arbitrary matrix of order $n$, with eigenvalues 
$\alpha_1, \alpha_2, \ldots, \alpha_n$ and $\beta_1, \beta_2, \ldots, \beta_n$, 
respectively. Let $X$ be a nonsingular matrix 
whose columns are eigenvectors of $A$. Then, there exists a permutation $\pi$ of the 
indices $1,2, \ldots, n$ such that 
$\displaystyle \sum_{i=1}^{n} |\alpha_i - \beta_{\pi(i)}|^2 \leq 
||X||^2_2 ||X^{-1}||^2_2 ||A-B||^2_F$.
\end{theorem}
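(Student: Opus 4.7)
The plan is to reduce the statement to a baby Hoffman--Wielandt type inequality for a diagonal matrix against an arbitrary matrix, and then to establish that reduced statement via a Schur decomposition, doubly stochastic matrices, and the Birkhoff--von Neumann theorem.

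First, a pure similarity trick absorbs the condition-number factor. Write $A = X D X^{-1}$ with $D = \operatorname{diag}(\alpha_1, \ldots, \alpha_n)$ and set $C := X^{-1} B X$, so that $C$ is similar to $B$ and shares its eigenvalues $\beta_1, \ldots, \beta_n$. Since $D - C = X^{-1}(A-B)X$, submultiplicativity $\|PMQ\|_F \leq \|P\|_2 \|M\|_F \|Q\|_2$ yields
\[
\|D - C\|_F^2 \;\leq\; \|X\|_2^2\,\|X^{-1}\|_2^2\,\|A - B\|_F^2,
\]
so it now suffices to produce a permutation $\pi$ with $\sum_i |\alpha_i - \beta_{\pi(i)}|^2 \leq \|D - C\|_F^2$, where $D$ is diagonal and $C$ is arbitrary with eigenvalues $\beta_j$.

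For this reduced problem I would invoke the Schur decomposition $C = U T U^*$, with $U$ unitary and $T$ upper triangular carrying eigenvalues $\beta_{\tau(1)}, \ldots, \beta_{\tau(n)}$ on its diagonal for some permutation $\tau$. Writing $T = D_T + N$ with $D_T$ the diagonal part and $N$ strictly upper triangular, unitary invariance of the Frobenius norm gives
\[
\|D - C\|_F^2 \;=\; \|DU - UT\|_F^2 \;=\; \sum_{i,j} \bigl| (\alpha_i - \beta_{\tau(j)})\,u_{ij} - (UN)_{ij} \bigr|^2.
\]
When $N = 0$ (the normal case) the sum is exactly $\sum_{i,j} |u_{ij}|^2\,|\alpha_i - \beta_{\tau(j)}|^2$, and $S_{ij} := |u_{ij}|^2$ is doubly stochastic since $U$ is unitary. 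Because the functional $S \mapsto \sum_{ij} S_{ij}|\alpha_i - \beta_{\tau(j)}|^2$ is linear on the Birkhoff polytope, its minimum over doubly stochastic matrices is attained at a vertex, so there is a permutation $\sigma$ with $\sum_i |\alpha_{\sigma(i)} - \beta_{\tau(i)}|^2 \leq \sum_{ij} S_{ij}|\alpha_i - \beta_{\tau(j)}|^2$. Taking $\pi := \tau \circ \sigma^{-1}$ then finishes the proof in the normal case.

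The genuine obstacle is the contribution of the strictly upper triangular part $N$: the cross terms it introduces in the Frobenius expansion above need not be nonnegative, so the clean normal-case bound does not pass through verbatim. My strategy to surmount it would be to exploit the freedom in Schur's theorem to place the eigenvalues on the diagonal of $T$ in any prescribed order, choosing $\tau$ so that the offending inner product $2\,\mathrm{Re}\!\sum_{i,j}\overline{(\alpha_i-\beta_{\tau(j)})\,u_{ij}}\,(UN)_{ij}$ is dominated by $\|UN\|_F^2 = \|N\|_F^2$. If a direct combinatorial choice proves elusive, a continuity argument in which the strictly upper part of $T$ is damped by a parameter $\varepsilon \in [0,1]$ should recover the inequality, since both $\|A-B\|_F$ and the assignment-problem minimum $\min_\pi \sum_i |\alpha_i - \beta_{\pi(i)}|^2$ vary continuously along the resulting path. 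This off-diagonal cancellation is the one step genuinely beyond the classical Hoffman--Wielandt argument, and is where the proof really hinges.
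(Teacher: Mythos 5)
Your opening reduction is correct and is in fact the right first move: conjugating by $X$ and using $\|PMQ\|_F\le\|P\|_2\|M\|_F\|Q\|_2$ cleanly absorbs the factor $\|X\|_2^2\|X^{-1}\|_2^2$, and your Schur--Birkhoff treatment of the $N=0$ case is the classical Hoffman--Wielandt argument, correctly executed. (There is no proof in the paper to compare against; Theorem \ref{starting theorem} is quoted from \cite{Ikramov-Nesterenko} without proof.) However, the step you single out as ``where the proof really hinges'' is not a surmountable technicality: the reduced inequality you are trying to prove --- $D$ diagonal, $C$ arbitrary, $\sum_i|\alpha_i-\beta_{\pi(i)}|^2\le\|D-C\|_F^2$ --- is false, and so is the theorem as stated. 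Take $A=\left[\begin{smallmatrix}0&1\\1&0\end{smallmatrix}\right]$ and $B=\left[\begin{smallmatrix}0&1\\0&0\end{smallmatrix}\right]$. Since $A$ is real symmetric one may take $X$ orthogonal, so $\|X\|_2^2\|X^{-1}\|_2^2=1$; the eigenvalues are $\alpha=\pm1$ and $\beta_1=\beta_2=0$, whence $\sum_i|\alpha_i-\beta_{\pi(i)}|^2=2$ for every $\pi$, while $\|A-B\|_F^2=1$. After your similarity reduction this is $D=\mathrm{diag}(1,-1)$ against the nilpotent $C=\frac12\left[\begin{smallmatrix}1&-1\\1&-1\end{smallmatrix}\right]$ with $\|D-C\|_F^2=1$. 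No choice of the Schur ordering $\tau$ and no damping/continuity argument in the strictly upper triangular part can recover the bound, because the quantity being bounded genuinely exceeds the right-hand side.

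What is true (and what I believe \cite{Ikramov-Nesterenko} actually proves) carries an extra dimensional factor: there is a permutation $\pi$ with $\sum_i|\alpha_i-\beta_{\pi(i)}|^2\le n\,\|X\|_2^2\|X^{-1}\|_2^2\,\|A-B\|_F^2$. The $n\times n$ cyclic permutation matrix versus the nilpotent shift obtained by zeroing one of its entries gives ratio exactly $n$ with $\kappa(X)=1$, so the factor $n$ cannot be removed when $B$ is allowed to be non-normal; the constant $1$ survives only when the Schur form of $X^{-1}BX$ is diagonal, which is precisely your $N=0$ case. Your skeleton is exactly the standard derivation of the correct statement --- the similarity reduction followed by the ``normal versus arbitrary'' lemma of Sun-type, $\sum_i|\alpha_i-\beta_{\pi(i)}|^2\le n\|D-C\|_F^2$ --- so the missing ingredient is that lemma (whose proof is genuinely different from the Birkhoff argument), not a cleverer handling of the cross term. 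As written, the statement cannot be proved because it is not true.
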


The number $||X||_2 ||X^{-1}||_2$ is the spectral condition number of $X$ and is usually 
denoted by $\kappa(X)$. For a nonsingular matrix $X \in M_n(\mathbb{C}), \ \kappa(X) 
= \displaystyle \frac{\sigma_{\text{max}} }{\sigma_{\text{min}}}$, the ratio of the largest 
and the smallest singular values of $X$. In Section \ref{sec-4}, we estimate $\kappa(.)$ for 
a particular block Vandermonde matrix $X$ which diagonalizes the block companion matrix 
of a given matrix polynomial.

\subsection{Matrix Polynomials}\hspace*{\fill}
\label{sec-2}

\medskip

We present the necessary definitions and preliminaries on matrix polynomials 
in this section. An $n \times n$ matrix polynomial of degree $m$ is a mapping $P$ from 
$\mathbb{C}$ to $M_n(\mathbb{C})$, defined by $P(\lambda) = \displaystyle \sum_{i=0}^{m} 
A_i\lambda^i$, where $A_i \in M_n(\mathbb{C})$ and $A_m \neq 0$. Matrix polynomials arise 
in several areas of applied mathematics. A good source of reference on matrix polynomials 
is the monograph by Gohberg, Lancaster and Rodman \cite{GLR-Book-2}. It is well known that 
any matrix polynomial is equivalent to a diagonal form  
$\text{diag}(d_1,d_2, \ldots, d_r, 0, 0, \ldots, 0)$ through unimodular matrix polynomials. 
This factorization is called the Smith normal form of $P(\lambda)$. The polynomials $d_j$ 
are called the invariant factors of $P(\lambda)$ and $d_j | d_{j+1}$ 
for every $j$. The matrix polynomial $P(\lambda)$ is said to be regular if 
$\text{det} P(\lambda)$ is nonzero as a polynomial in $\lambda$. In this case $r= n$. For 
a regular matrix polynomial $P(\lambda)$, the polynomial eigenvalue problem (PEP) seeks to 
find a scalar $\lambda_0$ and a nonzero vector $v$ such that $P(\lambda_0)v = 0$. 
Equivalently, for a given regular matrix polynomial $P(\lambda), \lambda_0 \in \mathbb{C}$ 
is an eigenvalue, if $\text{det}P(\lambda_0) = 0$. The nonzero vector $v \in \mathbb{C}^n$  satisfying the equation $P(\lambda_0)v=0$ is called an 
eigenvector of $P(\lambda)$ corresponding to an eigenvalue $\lambda_0$. Moreover, 
$\lambda_0 = 0$ is an eigenvalue of $P(\lambda)$ if and only if $A_0$ is singular. We say 
$\infty$ is an eigenvalue of $P(\lambda)$, when $0$ is an eigenvalue of the reverse matrix polynomial $\widehat{P} (\lambda):= \lambda^m P(\frac{1}{\lambda}) = A_0 \lambda^m + A_1 \lambda^{m-1} + \cdots + A_{m-1} \lambda + A_m$. Notice that for an 
$n \times n$ matrix polynomial of degree $m \geq 1$, there are at most $mn$ number of 
eigenvalues.

\medskip
Given an $n\times n$ matrix polynomial $P(\lambda)$ of degree $m$ with 
nonsingular leading coefficient, one can introduce a monic matrix polynomial corresponding 
to $P(\lambda)$ as follows: 
For $i = 0, \ldots, m-1$, let $U_i = A_m^{-1}A_i$. Define a monic matrix polynomial 
$P_U(\lambda):= I \lambda^m + U_{m-1} \lambda^{m-1} + \cdots + U_1 \lambda + U_0$ 
so that $P(\lambda) = A_m P_U(\lambda)$. The matrix polynomials $P$ and $P_U$ have the same 
eigenvalues. Moreover, the eigenvalues of $P_U$ are the same as that of the eigenvalues 
of the corresponding $mn \times mn$ block companion matrix 
$C:= \begin{bmatrix}
	 0 & I & 0 & \cdots &  0\\
	 0 & 0 & I & \cdots & 0 \\
	\vdots & \vdots &  \vdots & \ddots & \vdots\\
	 0 & 0 & 0 & \cdots & I\\
	-U_0 & -U_1 & -U_2 & \cdots & -U_{m-1}
	\end{bmatrix}$ (see \cite{Higham-Tisseur} for details). A nonzero 
vector $v \in \mathbb{C}^n$ is an eigenvector of $P(\lambda)$ corresponding to an 
eigenvalue $\lambda_0$ if and only if the vector 
$\begin{bmatrix*}[c]
v \\ \lambda_0 v \\ \vdots \\ \lambda_0^{m-1} v
\end{bmatrix*} \in \mathbb{C}^{mn}$ is an eigenvector of $C$ corresponding to the 
eigenvalue $\lambda_0$. Note that any eigenvector of $C$ is of this form. \\

\noindent
{\bf Assumptions:} Throughout this manuscript, we only work with matrix polynomials whose 
leading coefficient is nonsingular.

\medskip
An open dense subset of $\mathbb{C}^k$ is usually called a generic set. 
Generic subsets of $\mathbb{C}^k$ that are described through matrices arise in applications 
from differential equations. A natural way to study such subspaces is by describing their eigenstructure. It is well known that elements of generic sets of regular matrix polynomials 
have only simple eigenvalues and therefore the corresponding block companion matrices are 
always diagonalizable. However, there are 
regular matrix polynomials whose block companion matrix is not diagonalizable. One such 
matrix polynomial is $P(\lambda) = I \lambda^2 + U_1 \lambda + U_0$ where, $U_0 = 
\begin{bmatrix*}[r]
     -\frac{1}{2} & -\frac{\sqrt{3}}{2} \\
      \frac{\sqrt{3}}{2} & -\frac{1}{2}
\end{bmatrix*}$	and 
$U_1 = \begin{bmatrix*}[r]
    	-1 & 0 \\
    	 0 & 1
    	\end{bmatrix*}$. More about this matrix polynomial will be discussed in Remarks 
\ref{rem-1}. There are also matrix polynomials with very {\it simple} matrix coefficients, 
having repeated eigenvalues but still the corresponding block companion matrix is 
diagonalizable. For instance, consider $P(\lambda) = I \lambda^2 + I \lambda + I$. We are therefore interested in subclasses of regular matrix polynomials whose block companion matrix is always diagonalizable. We prove in this 
manuscript an affirmative result for quadratic matrix polynomials whose coefficients are 
commuting unitaries or doubly stochastic matrices. This then lets us derive a 
Hoffman-Wielandt type inequality for the block companion matrices of such matrix polynomials. 
It turns out that one cannot go for higher degree matrix polynomials to get an affirmative result.

\medskip
Given a matrix polynomial $P(\lambda) = I \lambda^m + A_{m-1}\lambda^{m-1} + \cdots + 
A_1 \lambda +A_0$, let us consider the following matrix equation $X^m + A_{m-1}X^{m-1} 
+ \cdots + A_1X + A_0 = 0$ with $X \in M_n(\mathbb{C})$. An $n \times n$ matrix $X$ 
satisfying this equation is called a right solvent of the equation. We just 
call a right solvent $X$ a solution here. There are at most $\binom{nm}{n}$ solutions to 
this equation \cite{Fuchs-Schwarz}. Solutions $X_1, X_2, \ldots, X_m$ of the above matrix 
equation are said to be independent if the block Vandermonde matrix, 
$V:= \begin{bmatrix*}[c]
	I & I & \cdots & I \\
	X_1 & X_2 & \cdots & X_m \\
	\vdots & \vdots & \ddots & \vdots \\
	X_1^{m-1} & X_2^{m-1} & \cdots & X^{m-1}_m
	\end{bmatrix*}$ is invertible. In such a case, the corresponding block companion 
matrix $C$ of $P(\lambda)$ is similar to the  block diagonal matrix 
$\begin{bmatrix}
	X_1 & 0 & \cdots & 0\\
	0 & X_2 & \cdots & 0\\
	\vdots & \vdots & \ddots & \vdots \\
	0 & 0 & \cdots & X_m
	\end{bmatrix}$ through the block Vandermonde matrix $V$. Details of these may be 
found in \cite{Connes-Schwarz}. Thus if each of these solutions happen to be 
diagonalizable, then the block companion matrix $C$ is also diagonalizable. We shall 
use this later on in proving that the block companion matrix of a quadratic matrix 
polynomial with commuting unitary coefficients is diagonalizable. 

\medskip
The manuscript is organized as follows. Sections \ref{sec-1} and \ref{sec-2} are 
introductory and contain a brief introduction to matrix polynomials that are needed 
for this manuscript. The main results are presented in Section \ref{sec-3}, which is 
further divided into subsections for ease of reading. Each of these subsections is 
self-explanatory. The Hoffman-Wielandt type inequality is derived for the 
corresponding block companion matrices of matrix polynomials (with appropriate 
assumptions) in each of these subsections. In Section \ref{sec-4} the spectral 
condition number of a matrix $X$, which appears in the Hoffman-Wielandt type 
inequality  is estimated. The manuscript ends with some concluding remarks. Wherever 
possible, necessary remarks and examples are provided to justify the assumptions made. 
Most of the examples were computationally verified using Matlab and SageMath.

\section{Main Results}\label{sec-3}

This section contains the main results of this paper. As mentioned in the introduction, 
the inequality \eqref{HW-ineq-1} for block companion matrices of matrix polynomials 
whose coefficients are normal matrices fails to hold in general. For instance, consider 
$P(\lambda) = \begin{bmatrix*}[r]
				2 & 0 \\
				0 & -2
              \end{bmatrix*} \lambda +                                                                                                                                            \begin{bmatrix*}[r]
2 & 2 \\
2 & -14
\end{bmatrix*}$ and $Q(\lambda) = \begin{bmatrix*}[r]
									1 & 0 \\
									0 & -\frac{5}{4}
									\end{bmatrix*}  \lambda + \begin{bmatrix*}[r]
																2 & 5 \\
																5 & -\frac{30}{4}
																\end{bmatrix*}$  with 
the corresponding block companion matrices $C=\begin{bmatrix*}[r]
													-1 & -1 \\
													1 & -7
												\end{bmatrix*}$ and 
$D = \begin{bmatrix*}[r]
   	  -2 & -5 \\
	   4 & -6
	 \end{bmatrix*}$ respectively. Note that the eigenvalues of $C$ and $D$ are 
$\lambda_1 = - 4 - 2\sqrt{2}, \lambda_2 = - 4 + 2 \sqrt{2}$ 
and $\mu_1 = - 4 + 4i, \mu_2 = -4 - 4i$ respectively. One can verify that for any 
permutation $\pi$ on $\{1,2\}, \ 
\displaystyle \sum_{i=1}^{2}| \lambda_{i} - \mu_{\pi(i)} |^2 = 48 $, whereas 
$||C-D||_F^2 = 27$. Inequality \eqref{HW-ineq-1} also fails in general for 
quadratic matrix polynomials whose coefficients are either normal, unitary or 
doubly stochastic matrices. For the sake of brevity, we present below an example, 
where the coefficients are unitary. Let 
$P(\lambda) = 
\begin{bmatrix}
	1 & 0 \\
	0 & 1 
\end{bmatrix} \lambda^2 + 
\begin{bmatrix*}[r]
\frac{1}{\sqrt{2}} & \frac{1}{\sqrt{2}}\\
\frac{1}{\sqrt{2}} & -\frac{1}{\sqrt{2}}
\end{bmatrix*} \lambda + 
\begin{bmatrix*}[r]
\frac{4}{\sqrt{41}} & \frac{5}{\sqrt{41}}\\
\frac{5}{\sqrt{41}} & -\frac{4}{\sqrt{41}}
\end{bmatrix*}$ and 
$Q(\lambda) = \begin{bmatrix}
1 & 0 \\
0 & 1 \\
\end{bmatrix} \lambda^2 + \begin{bmatrix*}[r]
\frac{1}{\sqrt{2}} & \frac{1}{\sqrt{2}} \\
\frac{1}{\sqrt{2}} & -\frac{1}{\sqrt{2}} \\
\end{bmatrix*} \lambda + \begin{bmatrix*}[r]
-\frac{1}{2} & \frac{\sqrt{3}}{2} \\
-\frac{\sqrt{3}}{2} & -\frac{1}{2} 
\end{bmatrix*}$. If $C$ and $D$ are the corresponding block companion matrices of these 
matrix polynomials, then $||C - D||_F^2 = 4$. However, on computing the eigenvalues 
of $C$ and $D$, one observes that for any permutation $\pi$ on $\{1,2,3,4\}$, 
$\displaystyle \sum_{i=1}^{4} |\mu_{\pi(i)} - \lambda_i|^2 \geq 4.5102 > 4$, thereby 
proving that the inequality \eqref{HW-ineq-1} fails to hold. It therefore makes it pertinent 
to consider matrix polynomials whose block companion matrices satisfy the Hoffman-Wielandt 
type inequality. We set out to do this for quadratic matrix polynomials whose coefficients 
are commuting unitaries in Section \ref{sec-3.1} and later on in Section \ref{sec-3.2} for 
matrix polynomials whose coefficients are doubly stochastic matrices, with some 
additional assumptions. Having said these, one can easily verify that the 
inequality \eqref{HW-ineq-1} holds for block companion matrices of linear matrix 
polynomials whose coefficients are unitary or upper triangular. 
	
\subsection{Hoffman-Wielandt type inequality for block companion matrices of matrix 
polynomials with unitary coefficients} \hspace*{\fill} 
\label{sec-3.1}

\medskip

We begin with the following theorem. The proof is skipped.

\begin{theorem}\label{thm-1}
Let $P(\lambda) = A_1 \lambda + A_0$ be a linear matrix polynomial. Then the corresponding 
block companion matrix $C$ of $P(\lambda)$ is diagonalizable 
\begin{itemize}
\item[(1)] when the coefficients are unitary matrices. 
\item[(2)] when the coefficients are diagonal matrices.  
\item[(3)] when the coefficients are positive (semi)definite matrices.
\end{itemize}
\end{theorem}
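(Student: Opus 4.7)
The plan is to observe that when $m=1$ the block companion matrix collapses drastically: with $U_0 := A_1^{-1}A_0$ (the standing nonsingularity of the leading coefficient is used here), the block companion construction yields the single $n\times n$ matrix
\[
C = -U_0 = -A_1^{-1}A_0.
\]
Each of the three cases then reduces to showing that this single matrix is diagonalizable, and in each case a one-line structural argument suffices.

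For part (1), $A_1$ unitary gives $A_1^{-1} = A_1^\ast$, so $C = -A_1^\ast A_0$ is, up to the scalar $-1$, a product of two unitaries and therefore itself unitary. Unitary matrices are normal, so the spectral theorem yields unitary diagonalizability. For part (2), the inverse of a diagonal matrix is diagonal and a product of diagonal matrices is diagonal, so $C$ is itself diagonal and the claim is immediate.

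For part (3), the standing assumption forces $A_1$ to be (strictly) positive definite, so its unique positive definite square root $A_1^{1/2}$ is available and invertible. The congruence identity
\[
A_1^{-1}A_0 \;=\; A_1^{-1/2}\bigl(A_1^{-1/2} A_0 A_1^{-1/2}\bigr) A_1^{1/2}
\]
exhibits $A_1^{-1}A_0$ as similar to the Hermitian (in fact positive semidefinite) matrix $A_1^{-1/2}A_0 A_1^{-1/2}$, which is unitarily diagonalizable with nonnegative real eigenvalues. Hence $C$ is diagonalizable with nonpositive real eigenvalues.

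There is no real obstacle here; the only subtle point is in part (3), where one cannot simply say "a product of Hermitian matrices is Hermitian" (it is not, in general), so the symmetric-congruence trick by $A_1^{1/2}$ is required to pass from $A_1^{-1}A_0$ to a genuinely Hermitian matrix before invoking the spectral theorem. This is likely why the authors chose to skip the proof: once one notices that $m=1$ trivializes the block structure, all three parts are one-line consequences of the spectral theorem or of closure of the appropriate matrix class under inversion and multiplication.
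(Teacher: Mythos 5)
Your proof is correct and follows exactly the route the paper has in mind: the paper skips the proof of Theorem \ref{thm-1}, but its later discussion of condition numbers (Section \ref{sec-4}) reveals the identical reasoning, namely $C=-A_1^{-1}A_0$ being unitary in case (1), diagonal in case (2), and similar to the Hermitian matrix $-A_1^{-1/2}A_0A_1^{-1/2}$ via the congruence $C=A_1^{-1/2}\bigl(-A_1^{-1/2}A_0A_1^{-1/2}\bigr)A_1^{1/2}$ in case (3). Your remark that one cannot shortcut case (3) by claiming a product of Hermitian matrices is Hermitian is exactly the right caution.
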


\begin{remark}
\label{rem-0}

It is not hard to construct linear matrix polynomials whose coefficients 
are either normal or upper (lower) triangular such that the 
corresponding block companion matrix $C$ is not diagonalizable. If $P(\lambda)$ is a 
quadratic matrix polynomial whose coefficients are either 
(a) diagonal matrices (b) normal matrices (c) upper (lower) triangular matrices 
or (d) positive (semi)definite, then again, the corresponding block companion matrix need 
not be diagonalizable. The following matrix polynomial serves as an example for all of 
the above cases: Let $P(\lambda) = 
\begin{bmatrix*}[c]
      1 & 0\\
	  0 & 1
\end{bmatrix*} \lambda^2 + 
\begin{bmatrix*}[r]
	  2 & 0\\
	  0 & 2
\end{bmatrix*} \lambda + 
\begin{bmatrix*}[r]
	1 & 0\\
	0 & 2
\end{bmatrix*}$.
\end{remark}

The above remark also implies that if a matrix polynomial has diagonal coefficients, it 
does not necessarily mean that the corresponding block companion matrix is diagonalizable. 
Theorem \ref{thm-1} and the remark that follows suggest that matrix polynomials with 
unitary coefficients might form a good candidate as far as diagonalizability of the block 
companion matrix is concerned. We discuss this below. We state the following interesting 
result due to Cameron (see Theorems $3.2$ and $3.3$, \cite{Cameron}).

\begin{theorem}\label{eigenvalue-unitary-coefficients}
\
\begin{enumerate}
\item If $P(\lambda)$ is a matrix polynomial with unitary coefficients, then the 
eigenvalues of $P(\lambda)$ lie in the annular region $\frac{1}{2} < |\lambda| < 2$.
\item Let \ $\mathcal{U} = \Big \{P(\lambda) = \displaystyle \sum_{i=0}^{m}U_i\lambda^i: 
U_i$'s are $n\times n$ unitary matrices and $m,n \in \mathbb{N}\Big \}$ and let 
$S_{\mathcal{U}} = \{|\lambda_0|: \lambda_0$ is an eigenvalue of 
$P(\lambda)\in \mathcal{U}\}$. Then, $\inf S_{\mathcal{U}} = \frac{1}{2}$ and 
$\sup S_{\mathcal{U}}=2$.
\end{enumerate}
\end{theorem}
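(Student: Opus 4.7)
The plan for part (1) is to work directly with an eigenpair. Suppose $\lambda_0$ is an eigenvalue with eigenvector $v \neq 0$, so $\sum_{i=0}^m \lambda_0^i U_i v = 0$. Rearranging, $\lambda_0^m U_m v = -\sum_{i=0}^{m-1} \lambda_0^i U_i v$; since each $U_i$ is unitary we have $\|U_i w\| = \|w\|$ for every vector $w$, and the triangle inequality then gives
$$|\lambda_0|^m \, \|v\| \;\leq\; \sum_{i=0}^{m-1} |\lambda_0|^i \, \|v\|, \qquad\text{i.e.,}\qquad |\lambda_0|^m \leq \sum_{i=0}^{m-1}|\lambda_0|^i.$$
If $|\lambda_0| \leq 1$ the conclusion $|\lambda_0| < 2$ is trivial; if $|\lambda_0| > 1$, summing the geometric series yields $|\lambda_0|^m(|\lambda_0| - 1) \leq |\lambda_0|^m - 1 < |\lambda_0|^m$, whence $|\lambda_0| < 2$.

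For the lower bound I would exploit the reverse polynomial defined earlier. Because $U_0$ is unitary, it is invertible, so $0$ is not an eigenvalue of $P$. The reverse polynomial $\widehat{P}(\lambda) = \lambda^m P(1/\lambda) = \sum_{i=0}^m U_{m-i}\lambda^i$ again has unitary coefficients, and $1/\lambda_0$ is one of its eigenvalues. Applying the upper bound just proved to $\widehat{P}$ gives $|1/\lambda_0| < 2$, that is, $|\lambda_0| > 1/2$.

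For part (2), I would exhibit explicit scalar polynomials (so $n = 1$, with coefficients on the unit circle) whose roots approach the bounds. Consider $P_m(\lambda) = \lambda^m - \lambda^{m-1} - \cdots - 1$ of degree $m \geq 2$. Since $P_m(1) = 1 - m < 0$ and $P_m(2) = 2^m - (2^m - 1) = 1 > 0$, there is a real root $r_m \in (1,2)$. Multiplying $P_m(r_m) = 0$ by $r_m - 1$ gives $r_m^{m+1} - 2 r_m^m + 1 = 0$, equivalently $r_m = 2 - r_m^{-m}$. Since $r_m > 1$ forces $r_m^{-m} \to 0$, one concludes $r_m \to 2$, showing $\sup S_{\mathcal{U}} \geq 2$; combined with part (1) this gives $\sup S_{\mathcal{U}} = 2$. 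The reverse polynomial $\widehat{P}_m$ then has a real root $1/r_m \to 1/2$, so $\inf S_{\mathcal{U}} \leq 1/2$, and again part (1) yields equality.

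The main obstacle is part (2): one has to find a sequence in $\mathcal{U}$ whose eigenvalues actually accumulate at the boundary of the annulus. The construction above works because the coefficients $\pm 1$ are automatically unitary in dimension one, and the extremal polynomials are precisely those that make the triangle inequality in part (1) nearly tight along a real direction. Part (1) itself is essentially a Cauchy-type root bound, once one notes that unitarity turns the operator-norm estimate into the scalar geometric-series estimate above.
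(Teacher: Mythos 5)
The paper does not prove this theorem at all: it is quoted from Cameron (Theorems $3.2$ and $3.3$ of \cite{Cameron}) with the proof omitted, so there is no internal argument to compare yours against. On its own merits, your proof is essentially correct and self-contained, and it follows the route one would expect: unitarity converts the eigenvector identity $\sum_i \lambda_0^i U_i v = 0$ into the scalar Cauchy-type bound $|\lambda_0|^m \leq \sum_{i=0}^{m-1}|\lambda_0|^i$, which gives $|\lambda_0|<2$; the reversal $\widehat{P}(\lambda)=\lambda^m P(1/\lambda)$, which again has unitary coefficients and nonzero (unitary) leading coefficient $U_0$, transfers this to the lower bound $|\lambda_0|>1/2$; and sharpness is witnessed by the scalar polynomials $\lambda^m-\lambda^{m-1}-\cdots-1$ and their reversals, whose coefficients are unitary $1\times 1$ matrices.

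One small gap in part (2): the assertion that ``$r_m>1$ forces $r_m^{-m}\to 0$'' is not a valid implication by itself (a sequence such as $r_m=1+1/m^2$ satisfies $r_m>1$ yet $r_m^{-m}\to 1$); you need $r_m$ bounded away from $1$. This is easy to supply: $P_m(3/2)=(3/2)^m-\bigl(2(3/2)^m-2\bigr)=2-(3/2)^m<0$ for $m\geq 2$, so $r_m>3/2$, hence $r_m^{-m}\leq (2/3)^m\to 0$ and the relation $r_m=2-r_m^{-m}$ then yields $r_m\to 2$ as you claim. With that one-line patch the argument is complete.
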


In a later section on matrix polynomials with doubly stochastic coefficients, we will 
give bounds on eigenvalues similar to that of Theorem \ref{eigenvalue-unitary-coefficients}. 
Let us observe that it suffices to consider monic matrix polynomials while dealing with 
commuting unitary coefficients. For, if $P(\lambda) = V_2 \lambda^2 + V_1 \lambda + V_0$, 
where the $V_i$'s are commuting unitary matrices, then we can consider the corresponding 
monic matrix polynomial $P_U(\lambda) = I \lambda^2 + U_1 \lambda + U_0$ and observe that 
the coefficients of $P_U$ are also commuting unitary matrices. With these observations, 
we have the following theorem.
	
\begin{theorem}\label{unitary-thm-1}
Let $P(\lambda)= I \lambda^2 + U_1 \lambda + U_0$ be an $n\times n$ matrix polynomial 
where the coefficients $U_0$ and $U_1$ are commuting unitary matrices. Then the 
corresponding block companion matrix $C$ of $P(\lambda)$ is diagonalizable.
\end{theorem}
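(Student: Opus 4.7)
The plan is to use the solvent-based diagonalization criterion already recalled in the excerpt: if we can produce two diagonalizable solvents $X_1, X_2$ of the matrix equation $X^2 + U_1 X + U_0 = 0$ such that the associated block Vandermonde matrix $V = \begin{bmatrix} I & I \\ X_1 & X_2 \end{bmatrix}$ is invertible, then $C$ is similar through $V$ to $\mathrm{diag}(X_1, X_2)$ and is therefore diagonalizable.

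First I would exploit the hypothesis that $U_0$ and $U_1$ are commuting normal matrices: by the spectral theorem they are simultaneously unitarily diagonalizable, so there exists a unitary $W$ with $W^{*} U_0 W = D_0 = \mathrm{diag}(\alpha_1, \ldots, \alpha_n)$ and $W^{*} U_1 W = D_1 = \mathrm{diag}(\beta_1, \ldots, \beta_n)$, where $|\alpha_i| = |\beta_i| = 1$ for every $i$. For each $i$, the scalar quadratic $\lambda^2 + \beta_i \lambda + \alpha_i = 0$ has two roots $\gamma_i^{+}, \gamma_i^{-}$. Define diagonal matrices $\Lambda_{\pm} = \mathrm{diag}(\gamma_1^{\pm}, \ldots, \gamma_n^{\pm})$ and set $X_{\pm} = W \Lambda_{\pm} W^{*}$. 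A direct computation shows $X_{\pm}^2 + U_1 X_{\pm} + U_0 = W(\Lambda_{\pm}^2 + D_1 \Lambda_{\pm} + D_0)W^{*} = 0$, so $X_{+}$ and $X_{-}$ are solvents, and both are normal (hence diagonalizable) by construction.

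Next I would verify that $X_{+}$ and $X_{-}$ are independent in the Vandermonde sense. Block row reduction reduces $\begin{bmatrix} I & I \\ X_{+} & X_{-} \end{bmatrix}$ to $\begin{bmatrix} I & I \\ 0 & X_{-} - X_{+} \end{bmatrix}$, so invertibility of the Vandermonde is equivalent to invertibility of $X_{-} - X_{+} = W\,\mathrm{diag}(\gamma_i^{-} - \gamma_i^{+})\,W^{*}$, i.e., to $\gamma_i^{+} \neq \gamma_i^{-}$ for every $i$. A double root for the $i$th quadratic would force the discriminant $\beta_i^2 - 4\alpha_i$ to vanish, but $|\beta_i^2| = 1$ while $|4\alpha_i| = 4$, so this cannot happen. (Alternatively, a double root $-\beta_i/2$ has modulus $1/2$, contradicting Theorem~\ref{eigenvalue-unitary-coefficients}.) Hence the block Vandermonde is invertible, and applying the similarity from the discussion preceding the theorem gives that $C$ is similar to $\mathrm{diag}(X_{+}, X_{-})$, completing the proof.

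The main technical point is not computational but structural: one has to pick the solvents compatibly with the common unitary diagonalizer so that they remain normal and have diagonal-wise distinct spectra. Commutativity of $U_0, U_1$ is what makes this possible; without it one could not control both coefficients with a single unitary, and even though roots of each scalar quadratic always differ, we would have no mechanism to assemble them into genuinely independent matrix solvents. The inequality $|\beta_i^2| \ne |4\alpha_i|$ from unitarity provides the crucial nondegeneracy that makes the block Vandermonde invertible automatically, so no extra genericity hypothesis is needed.
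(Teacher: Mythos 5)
Your proposal is correct and follows essentially the same route as the paper: simultaneous unitary diagonalization of the commuting coefficients, the modulus argument ruling out a double root of each scalar quadratic ($|\beta_i^2|=1\neq 4=|4\alpha_i|$ is the same contradiction the paper derives as $|\lambda_0|=\tfrac12$ versus $|\lambda_0|=1$), invertibility of the block Vandermonde matrix via $X_+-X_-$, and the solvent-based similarity. The only cosmetic difference is that you conjugate the diagonal solvents back to solvents of the original equation, whereas the paper first conjugates the whole polynomial to diagonal form and works there; the two are equivalent.
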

	
\begin{proof}
The matrices $U_0$ and $U_1$ being commuting unitary matrices, there exists an 
$n\times n$ unitary matrix $W$ such that $WU_1W^*=D_1$ and $WU_0W^*=D_0$ where, 
$D_1,D_0$ are diagonal matrices whose diagonal entries are the eigenvalues of 
$U_1$ and $U_0$ respectively. Let $D_1 = \text{diag}(a_{11},a_{22},\ldots,a_{nn})$ 
and $D_0 = \text{diag}(b_{11},b_{22},\ldots,b_{nn})$. Let 
$Q(\lambda):= WP(\lambda)W^*  = I \lambda^2 + D_1 \lambda + D_0$. The corresponding 
block companion matrix of $Q(\lambda)$ is 
$D = \begin{bmatrix*}[c]
	0 & I \\
   -D_0 & -D_1
	\end{bmatrix*}$. Note that $C$ and $D$ are similar through the unitary matrix 
$U:= W \oplus W$. It therefore suffices to prove that the matrix $D$ is diagonalizable. 
Consider
$Q(\lambda) = I \lambda^2 + D_1 \lambda + D_0
		= \begin{bmatrix*}[c]
		  \lambda^2 + a_{11} \lambda+b_{11} & 0 & \cdots & 0\\
			0 & \lambda^2 + a_{22}\lambda + b_{22} & \cdots & 0\\
			\vdots & \vdots & \ddots & \vdots \\
			0 & 0 & \cdots & \lambda^2+a_{nn}\lambda + b_{nn}
		\end{bmatrix*}$.
Let $f_{ii}(\lambda):= \lambda^2 + a_{ii}\lambda + b_{ii}, \ 1 \leq i \leq n$. 
Thus  
$Q(\lambda)=\begin{bmatrix*}[c]
			f_{11}(\lambda) & 0 & \cdots & 0\\
			0 & f_{22}(\lambda) & \cdots & 0\\
			\vdots & \vdots & \ddots & \vdots \\
			0 & 0 & \cdots & f_{nn}(\lambda)
		\end{bmatrix*}$. Observe that for each $i, \ 1 \leq i \leq n$, the 
polynomial $f_{ii}(\lambda)$ has two distinct roots. For otherwise, 
$f_{ii}(\lambda)=(\lambda-\lambda_0)^2$ for some $\lambda_0$, so that 
$\lambda^2 + a_{ii}\lambda + b_{ii} = \lambda^2 - 2 \lambda_0 \lambda + \lambda_0^2$. 
Comparing the coefficients we get, $a_{ii} = -2\lambda_0$ and $b_{ii} = \lambda_0^2$. 
Taking the modulus we have, $|\lambda_0| = \frac{1}{2}$ and $|\lambda_0| = 1$, which 
is not possible. An alternate argument follows from the first statement of 
Theorem \ref{eigenvalue-unitary-coefficients}. This proves the assertion that 
$f_{ii}(\lambda)$ has two distinct roots. Let $\lambda_i \neq \mu_i$ be two distinct 
roots of $f_{ii}(\lambda)$ for $1 \leq i \leq n$. Let us now define 
$X_1:= \text{diag}(\lambda_1,\lambda_2,\ldots,\lambda_n)$
and $X_2:= \text{diag}(\mu_1,\mu_2,\ldots,\mu_n)$. It is easy to verify that $X_1$ 
and $X_2$ satisfy the quadratic matrix equation $X^2 + D_1X + D_0 = 0$. Moreover, 
$\text{det}(X_1 - X_2) = \displaystyle \prod_{i=1}^{n}(\lambda_i-\mu_i) \neq 0$, 
as $\lambda_i \neq \mu_i$ for all $i$. Therefore the block Vandermonde 
matrix, as defined in Section \ref{sec-2}, is invertible; in other words, $X_1$ and 
$X_2$ are two independent solutions to the matrix equation $X^2 + D_1X + D_0 = 0$. 
Once again, it follows from what has been mentioned earlier in Section \ref{sec-2}, 
that the block companion matrix $D=\begin{bmatrix*}
		      	    0 & I \\
			       -D_0 & -D_1
		          \end{bmatrix*}$ is similar to the block diagonal matrix 
$\widetilde{D} = \begin{bmatrix*}
 X_1 & 0 \\
 0 & X_2
\end{bmatrix*}$ through the block Vandermonde matrix $ V = \begin{bmatrix*}
	I & I \\
	X_1 & X_2
\end{bmatrix*}$. Since both $X_1$ and $X_2$ are diagonal, $\widetilde{D}$ is diagonal. 
Hence $D$ is diagonalizable.	
\end{proof} 

Few remarks are in order.

\begin{remark}
\label{rem-1}
\
\begin{enumerate}
	
\item Theorem \ref{unitary-thm-1} not only proves diagonalizability of 
the block companion matrix $D$, but also explicitly gives the invertible block 
Vandermonde matrix $V$ through which diagonalization happens. Note that the block 
companion matrix $C$ is then diagonalizable through the matrix $X =UV$, where $U$ is 
the unitary matrix from the above theorem. In Section \ref{sec-4} we prove that 
$\kappa(X) < 2$.	
	
\item Theorem \ref{unitary-thm-1} does not hold good if the unitary matrices $U_0$ and 
$U_1$ do not commute. For example, when $n=2$ consider the matrix polynomial 
$P(\lambda) = I \lambda^2 + U_1 \lambda + U_0$ where, $U_0 = 
\begin{bmatrix*}[r]
     -\frac{1}{2} & -\frac{\sqrt{3}}{2} \\
      \frac{\sqrt{3}}{2} & -\frac{1}{2}
\end{bmatrix*}$	and 
$U_1 = \begin{bmatrix*}[r]
    	-1 & 0 \\
    	 0 & 1
    	\end{bmatrix*}$. Note that $U_1$ and $U_0$ are unitary matrices that do not 
commute. One can easily check that the corresponding block companion matrix $C$ of 
$P(\lambda)$ has two distinct eigenvalues $1$ and $-1$, each of algebraic multiplicity 
$2$. However, a simple computation reveals that the geometric multiplicity of both 
these eigenvalues is $1$. Hence $C$ is not diagonalizable. 

\medskip
\noindent		
For $n = 3$, consider $P(\lambda)=\begin{bmatrix*}[c]
    			1 & 0 & 0 \\
    			0 & 1 & 0 \\
    			0 & 0 & 1
    		\end{bmatrix*}\lambda^2 + \begin{bmatrix*}[c]
    			1 & 0 & 0 \\
    			0 & 0 & 1 \\
    			0 & 1 & 0
    		\end{bmatrix*}\lambda + \begin{bmatrix*}[c]
    			0 & 1 & 0 \\
    			0 & 0 & 1 \\
    			1 & 0 & 0
    		\end{bmatrix*}$ with non-commuting coefficients. The corresponding 
block companion matrix $C$ of $P(\lambda)$ is not diagonalizable in this case too. Note 
that one can extend this example and see that the block companion matrix $C$ 
is not diagonalizable for any size $n \geq 4$ as well.
    		
\item If the degree of matrix polynomial is greater than $2$, the corresponding block 
companion matrix need not be diagonalizable, even if all coefficients are commuting 
unitary matrices. For example consider 
$P(\lambda) = \begin{bmatrix*}[r]
    			1 & 0 \\
    			0 & 1
    		\end{bmatrix*}\lambda^3 + \begin{bmatrix*}[r]
    									-1 & 0 \\
    									 0 & -1
    								   \end{bmatrix*}\lambda^2 + 
    		\begin{bmatrix*}[r]
    			-1 & 0\\
    			0 & -1
    		\end{bmatrix*}\lambda + \begin{bmatrix*}[r]
    			1 & 0\\
    			0 & 1
    		\end{bmatrix*}$. The corresponding block companion matrix $C$ of $P(\lambda)$ 
is not diagonalizable. The argument is the same as in the previous remark. 	     
    		
\end{enumerate}
\end{remark}

The above remark justifies the need to consider only quadratic matrix polynomials with 
commuting unitary coefficients. We now deduce the Hoffman-Wielandt type inequality for
quadratic matrix polynomials with unitary coefficients.
     
\begin{theorem}\label{HW-thm-1}
Let $P$ and $Q$ be quadratic matrix polynomials of same size, where $P$ 
satisfies the conditions of Theorem \ref{unitary-thm-1}. If $C$ and $D$ are the 
corresponding block companion matrices, then there exists a permutation $\pi$ of 
the indices $1, 2, \ldots, 2n$ such that 
$\displaystyle \sum_{i=1}^{2n} |\alpha_i - \beta_{\pi(i)}|^2 
\leq ||X||^2_2 ||X^{-1}||^2_2 ||C-D||^2_F$, where $\{\alpha_i\}$ and $\{\beta_i\}$ are 
the eigenvalues of $C$ and $D$ respectively, and $X$ is a nonsingular matrix whose 
columns are the eigenvectors of $C$.
\end{theorem}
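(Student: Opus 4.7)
The plan is to derive this as an essentially immediate consequence of Theorem \ref{starting theorem} (Ikramov--Nesterenko) once diagonalizability of $C$ is in hand. The heavy lifting has already been done in Theorem \ref{unitary-thm-1}, which shows that the block companion matrix of $P$ is diagonalizable, and in fact produces an explicit diagonalizing matrix: if $W$ is the unitary that simultaneously diagonalizes $U_0$ and $U_1$, and $V$ is the block Vandermonde matrix built from the two diagonal solvents $X_1,X_2$ of $X^2 + D_1 X + D_0 = 0$, then $X := (W \oplus W)\,V$ is a nonsingular matrix whose columns are eigenvectors of $C$.

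First I would invoke Theorem \ref{unitary-thm-1} to assert that $C$ is diagonalizable, and let $X$ be the matrix of eigenvectors described above. Next, since no structural assumption is imposed on $Q$ beyond being a quadratic matrix polynomial (with nonsingular leading coefficient, per the standing assumption of the paper), I would treat $D$ simply as an arbitrary $2n \times 2n$ matrix. With $A := C$ (diagonalizable of order $2n$) and $B := D$ (arbitrary of order $2n$), the hypotheses of Theorem \ref{starting theorem} are met.

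Finally I would apply Theorem \ref{starting theorem} directly: there exists a permutation $\pi$ of $\{1, 2, \ldots, 2n\}$ such that
\[
\sum_{i=1}^{2n} |\alpha_i - \beta_{\pi(i)}|^2 \;\leq\; \|X\|_2^2\,\|X^{-1}\|_2^2\,\|C-D\|_F^2,
\]
which is exactly the claimed inequality. Since this is a direct invocation of previously established results, there is no real obstacle in the argument itself; the only subtlety worth flagging explicitly in the write-up is that the Theorem \ref{starting theorem} does not require any hypothesis on $B = D$, so one does not need $Q$'s coefficients to share any structure with those of $P$. If desired, one could also remark that the factor $\|X\|_2^2 \|X^{-1}\|_2^2 = \kappa(X)^2$ is bounded independently of the coefficients (in fact $\kappa(X) < 2$, as will be shown in Section \ref{sec-4}), so the inequality takes a particularly clean form in this setting.
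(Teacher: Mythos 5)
Your proposal is correct and matches the paper's own proof, which likewise observes that Theorem \ref{unitary-thm-1} gives diagonalizability of $C$ and then applies Theorem \ref{starting theorem} directly. The extra detail you supply about the explicit eigenvector matrix $X = (W \oplus W)V$ is consistent with Remark \ref{rem-1} but not needed for the argument.
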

     
\begin{proof}
The assumptions on $P$ ensure that the matrix $C$ is diagonalizable. The result now 
follows from Theorem \ref{starting theorem}.
\end{proof}
 
The Hoffman-Wielandt type inequality for linear matrix polynomials is stated below. 
We skip the proof as it is similar to above theorem.
    
\begin{theorem}\label{HW-thm-1.1}
Let $P$ and $Q$ be linear matrix polynomials of same size, where $P$ satisfies any of the 
conditions of Theorem \ref{thm-1}. If $C$ and $D$ are 
the corresponding block companion matrices, then there exists a permutation $\pi$ of 
the indices $1, 2, \ldots, n$ such that 
$\displaystyle \sum_{i=1}^{n} |\alpha_i - \beta_{\pi(i)}|^2 
\leq ||X||^2_2 ||X^{-1}||^2_2 ||C-D||^2_F$, where $\{\alpha_i\}$ and $\{\beta_i\}$ 
are the eigenvalues of $C$ and $D$ respectively, and $X$ is a nonsingular 
matrix whose columns are the eigenvectors of $C$.
\end{theorem}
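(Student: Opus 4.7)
The plan is to reduce the statement directly to the Ikramov--Nesterenko inequality (Theorem \ref{starting theorem}), which delivers exactly the claimed bound once one of the two matrices involved is known to be diagonalizable. In the present setting $C$ will play the role of the diagonalizable matrix while $D$ is arbitrary, so the whole proof collapses to verifying that $C$ is diagonalizable and then invoking the Ikramov--Nesterenko theorem.

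First I would observe that for a linear matrix polynomial $P(\lambda)=A_1\lambda+A_0$ with nonsingular leading coefficient, the associated monic polynomial is $P_U(\lambda)=I\lambda+A_1^{-1}A_0$, and the block companion matrix is simply the single $n\times n$ block $C=-A_1^{-1}A_0$. Theorem \ref{thm-1} already supplies diagonalizability of $C$ in each of the three admissible cases: if $A_0,A_1$ are unitary then $C$ is unitary and hence unitarily diagonalizable; if they are diagonal then $C$ is diagonal; and if they are positive (semi)definite (so $A_1$ is in fact positive definite by the standing nonsingularity assumption), then $C$ is similar via $A_1^{1/2}$ to the Hermitian matrix $-A_1^{-1/2}A_0A_1^{-1/2}$ and is therefore diagonalizable. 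In every case one produces an invertible matrix $X$ whose columns are eigenvectors of $C$.

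Next I would apply Theorem \ref{starting theorem} to the pair $(C,D)$ of $n\times n$ matrices: since $C$ is diagonalizable with eigenvector matrix $X$ and $D$ is an arbitrary $n\times n$ matrix, the theorem produces a permutation $\pi$ of $\{1,\ldots,n\}$ satisfying
\[
\sum_{i=1}^{n} |\alpha_i - \beta_{\pi(i)}|^2 \leq ||X||_2^2 \, ||X^{-1}||_2^2 \, ||C-D||_F^2,
\]
which is precisely the assertion of the theorem.

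The main (and essentially only) potential obstacle would have been the diagonalizability of $C$, and this is already handled by Theorem \ref{thm-1}; what remains is a bare invocation of Ikramov--Nesterenko. Consequently the proof is a near carbon copy of that of Theorem \ref{HW-thm-1}, the only substantive differences being the size of the block companion matrix (here $n\times n$ rather than $2n\times 2n$) and the three-way alternative in the hypotheses on the coefficients of $P$.
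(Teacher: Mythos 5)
Your proposal is correct and matches the paper's intended argument exactly: the paper explicitly skips this proof, noting it is the same as that of Theorem \ref{HW-thm-1}, namely that Theorem \ref{thm-1} gives diagonalizability of $C$ and then Theorem \ref{starting theorem} is invoked. Your additional case-by-case verification of diagonalizability is a harmless elaboration of what Theorem \ref{thm-1} already asserts.
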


\subsection{Hoffman-Wielandt type inequality for block companion matrices of matrix 
polynomials with doubly stochastic coefficients}
\hspace*{\fill}
\label{sec-3.2}   

\medskip

We now consider matrix polynomials whose coefficients are doubly stochastic matrices. 
Recall that a nonnegative square matrix is doubly stochastic if all the row and column sums are 
$1$. A classical result of Birkhoff says that any doubly stochastic matrix is a 
convex combination of permutation matrices (see Section $8.7$ of \cite{Horn-Johnson}). 
Since permutation matrices are unitary, it is natural to ask if the Hoffman-Wielandt type 
inequality holds for matrix polynomials with doubly stochastic coefficients. We explore 
this question in this section.

We begin this section with eigenvalue bounds for matrix polynomials whose coefficients 
are doubly stochastic matrices, with some added assumptions. We state this below and 
skip the proof as the proof technique is essentially the same as 
in \cite{Cameron} with the observation that the spectral norm of a doubly stochastic matrix 
is $1$ and the inverse of a permutation matrix is again a permutation matrix. We illustrate 
via examples the validity of these assumptions. We also state a result analogous to the 
second statement of Theorem \ref{eigenvalue-unitary-coefficients}. Once again, we skip 
the proof to avoid repetitiveness. 
    
\begin{theorem}\label{doubly-stochastic-thm-1-2}
\
\begin{enumerate}
\item Let $P(\lambda) = A_m \lambda^m + A_{m-1} \lambda^{m-1} + \cdots + A_1 \lambda + A_0$, 
where $A_m$, $A_0$ are $n \times n$ permutation matrices and $A_{m-1}, \ldots, A_1$ are 
$n \times n$ doubly stochastic matrices. If $\lambda_0$ is an eigenvalue of $P(\lambda)$, 
then $\frac{1}{2} < |\lambda_0| < 2$.
\item Let $\mathcal{D} = \Big\{P(\lambda) = \displaystyle \sum_{i=0}^{m}A_i\lambda^i: A_i$'s 
are $n\times n$ doubly stochastic matrices, $A_m, A_0$ are permutation matrices and 
$m, n \in \mathbb{N}$ $\Big \}$ and let $S_{\mathcal{D}} = \{|\lambda_0|: \lambda_0$ is an 
eigenvalue of $P(\lambda) \in \mathcal{D}$ $\}$. Then $\inf S_{\mathcal{D}} = \frac{1}{2}$ 
and $\sup S_{\mathcal{D}} = 2$.
\end{enumerate}
\end{theorem}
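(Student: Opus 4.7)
The plan is to mimic Cameron's argument in the proof of Theorem \ref{eigenvalue-unitary-coefficients}, using the two hints the authors already flag: $\|A_i\|_2 \leq 1$ for every doubly stochastic $A_i$, and the fact that permutation matrices are unitary (so they preserve the Euclidean norm and have permutation inverses). For the upper bound in part (1), I would pick an eigenpair $P(\lambda_0)v = 0$ with $\|v\|_2 = 1$, rearrange as $A_m \lambda_0^m v = -\sum_{i=0}^{m-1} A_i \lambda_0^i v$, and take spectral norms on both sides. Since $A_m$ is a permutation matrix, the left side has norm $|\lambda_0|^m$, while the triangle inequality and $\|A_i\|_2 \le 1$ bound the right side by $\sum_{i=0}^{m-1} |\lambda_0|^i$. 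A short geometric-series manipulation of $|\lambda_0|^m \le \sum_{i=0}^{m-1} |\lambda_0|^i$ then forces $|\lambda_0| < 2$.

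For the lower bound, I would pass to the reverse polynomial $\widehat{P}(\lambda) = \lambda^m P(1/\lambda) = A_0 \lambda^m + A_1 \lambda^{m-1} + \cdots + A_m$, observing first that $\lambda_0 = 0$ is ruled out because $A_0$ is a permutation matrix, hence invertible. Because the roles of $A_0$ and $A_m$ swap under the reversal, $\widehat{P}$ again has permutation leading and constant coefficients and doubly stochastic middle coefficients, so the upper-bound argument applies to $\widehat{P}$ at the eigenvalue $1/\lambda_0$, yielding $|1/\lambda_0| < 2$, i.e., $|\lambda_0| > 1/2$. That completes part (1).

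For part (2), the containment $S_{\mathcal{D}} \subseteq (1/2, 2)$ is immediate from part (1), so the task is to produce, for every $\varepsilon > 0$, polynomials in $\mathcal{D}$ with an eigenvalue of modulus exceeding $2 - \varepsilon$ and another with modulus less than $1/2 + \varepsilon$. I would reduce to the scalar-coefficient case by taking $n = 2$ and using only the two permutation matrices $I$ and $\sigma = \bigl[\begin{smallmatrix} 0 & 1 \\ 1 & 0 \end{smallmatrix}\bigr]$, whose common eigenvector $(1,-1)^T$ diagonalises the problem. Choosing $A_m = A_0 = I$ and a suitable pattern of $I$'s and $\sigma$'s (respectively, doubly stochastic convex combinations) for the intermediate coefficients reduces the eigenvalue problem on this eigenvector to a scalar polynomial equation in $\lambda$ whose largest real root can be driven arbitrarily close to $2$ by letting $m \to \infty$, exactly as in the scalar situation treated in \cite{Cameron}. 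The lower extreme $1/2$ then follows again by passing to $\widehat{P}$.

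The main obstacle I expect is part (2): the upper-bound estimate in part (1) does not use the specific structure of doubly stochastic matrices beyond $\|A_i\|_2 \le 1$, so that portion is a verbatim translation of Cameron's estimate, but exhibiting explicit families that saturate the bounds requires care, since doubly stochastic matrices form a much smaller set than the unit ball in spectral norm. The cleanest route is to restrict coefficients to the convex hull of $\{I, \sigma\}$ and exploit a simultaneous eigenvector to reduce to a scalar construction, but verifying that the resulting scalar polynomials have a root approaching $2$ (respectively, $1/2$) is where the genuine work lies and where mimicking Cameron's examples verbatim pays off.
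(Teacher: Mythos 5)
Your proposal is correct and takes essentially the approach the paper intends: the authors skip the proof, saying only that it is Cameron's argument combined with the observations that doubly stochastic matrices have spectral norm at most one and that the inverse of a permutation matrix is again a permutation, which is exactly your part (1) (norm estimate for the upper bound, reversal $\widehat{P}$ for the lower bound). Your reduction of part (2) to scalar polynomials via the common eigenvector $(1,-1)^T$ of $I$ and $\bigl[\begin{smallmatrix}0&1\\1&0\end{smallmatrix}\bigr]$ is the right way to realize Cameron's extremal scalar examples (e.g.\ $\lambda^m-\lambda^{m-1}-\cdots-\lambda-1$, whose largest root tends to $2$) within the doubly stochastic class, so the outline is sound.
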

    
Few remarks are in order.
  
\begin{remark}\label{rem-2}

If the leading coefficient or the constant term (or both) is a doubly stochastic matrix, 
but not a permutation matrix, then the eigenvalues may not necessarily lie in the 
region $\frac{1}{2} < |\lambda| < 2$. The following examples illustrate this. 
Let $P(\lambda) = I \lambda^2 + \begin{bmatrix*}[c]
    			\frac{1}{4} & \frac{3}{4} \\	
    			\frac{3}{4} & \frac{1}{4}
    		\end{bmatrix*} \lambda + \begin{bmatrix*}[c]
    			\frac{1}{3} & \frac{2}{3}\\
    			\frac{2}{3} & \frac{1}{3}
    		\end{bmatrix*}$. One of the eigenvalue is $\frac{3- \sqrt{57}}{12} 
= -0.3792 $, which is less than $\frac{1}{2}$ in absolute value. 
Similarly if $P(\lambda) = \begin{bmatrix*}[c]
    			\frac{1}{3} & \frac{2}{3}\\
    			\frac{2}{3} & \frac{1}{3}
    		\end{bmatrix*}\lambda^2 + \begin{bmatrix*}[c]
    			\frac{1}{4} & \frac{3}{4} \\	
    			\frac{3}{4} & \frac{1}{4}
    		\end{bmatrix*} \lambda +\begin{bmatrix*}[c]
    			1 & 0 \\
    			0 & 1
    		\end{bmatrix*}$. The eigenvalues of $P(\lambda)$ are 
$\frac{-1 \pm i\sqrt{3}}{2}$ and $\frac{-3 \pm \sqrt{57}}{4}$ and the absolute value of 
$\frac{-3-\sqrt{57}}{4}$ is $2.637 > 2$. 
\end{remark}
       
We are now in a position to derive diagonalizability of the block companion matrix of 
a matrix polynomial with doubly stochastic coefficients. We begin with some easy observations.
    
\begin{theorem}\label{doubly-stochastic-thm-4}
Let $P(\lambda) = A_1 \lambda + A_0$ be a linear matrix polynomial whose coefficients are 
$2 \times 2$ doubly stochastic matrices. Then, the corresponding block companion matrix is 
diagonalizable. 
\end{theorem}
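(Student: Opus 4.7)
Every $2\times 2$ doubly stochastic matrix has the form $M(a) = \begin{pmatrix} a & 1-a \\ 1-a & a \end{pmatrix}$ for some $a \in [0,1]$. The standing nonsingularity assumption on the leading coefficient forces the parameter corresponding to $A_1$ to satisfy $a \neq 1/2$, so that $\det M(a) = 2a-1 \neq 0$.

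My approach will be to exploit a single structural feature of this one-parameter family: all matrices $M(a)$ are simultaneously diagonalized by one fixed orthogonal matrix, independent of the parameter. First I would verify that each $M(a)$ is real symmetric and that $M(a)M(b) = M(b)M(a)$ for every $a,b \in [0,1]$ (a one-line direct multiplication, the off-diagonal entries $a(1-b)+(1-a)b$ being symmetric in $a,b$). The common eigenvectors are $(1,1)^T$ and $(1,-1)^T$, with corresponding eigenvalues $1$ and $2a-1$, so that setting $W := \tfrac{1}{\sqrt{2}}\begin{pmatrix} 1 & 1 \\ 1 & -1 \end{pmatrix}$ one obtains $W^T M(a) W = \mathrm{diag}(1,\, 2a-1)$ for every $a \in [0,1]$.

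Since $P$ has degree one, the block companion matrix collapses to the $2 \times 2$ matrix $C = -A_1^{-1}A_0$. Writing $A_1 = M(a)$ and $A_0 = M(b)$ with $a \neq 1/2$, the simultaneous diagonalization above gives
$$C = -A_1^{-1}A_0 = -W\,\mathrm{diag}\!\left(1,\; \tfrac{2b-1}{2a-1}\right) W^T,$$
exhibiting $C$ as orthogonally similar to a diagonal matrix. This finishes the proof.

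The argument is essentially a structural observation and there is no genuine obstacle: the only content is recognizing that the family $\{M(a) : a \in [0,1]\}$ is commutative and orthogonally codiagonalized by a fixed $W$ that does not depend on $a$. It is worth noting that this proof does \emph{not} extend verbatim to $n \geq 3$, since $n \times n$ doubly stochastic matrices need not commute, and even when $A_1$ is a nonsingular doubly stochastic matrix, $A_1^{-1}$ need not be doubly stochastic (or even nonnegative), so the analogous statement for larger sizes requires the further hypotheses taken up later in the section.
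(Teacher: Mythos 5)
Your proof is correct and rests on essentially the same observation as the paper's: both parametrize the coefficients as $A_1 = M(a)$, $A_0 = M(b)$ and exploit the structure of this family to conclude that $C = -A_1^{-1}A_0$ is real symmetric, the paper by computing $C = -\tfrac{1}{2a-1}\bigl(\begin{smallmatrix} a+b-1 & a-b \\ a-b & a+b-1 \end{smallmatrix}\bigr)$ explicitly and you by exhibiting the fixed orthogonal codiagonalizer $W$. Your phrasing via simultaneous diagonalization is a slightly more conceptual packaging of the same computation, and your closing remark about why the argument fails for $n \geq 3$ matches the paper's Remark on that point.
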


\begin{proof}
Writing $A_0$ and $A_1$ as 
$A_0 =\begin{bmatrix*}[c]
    		b & 1-b \\
    		1-b & b
     \end{bmatrix*}, \ A_1 = \begin{bmatrix*}[c]
    		a & 1-a \\
    		1-a & a
    	\end{bmatrix*}$ where, $0 \leq a, b \leq 1$, we observe that  
$C = -\begin{bmatrix*}[c]
    		\frac{a+b-1}{2a-1} & \frac{a-b}{2a-1} \\
    		\frac{a-b}{2a-1} & \frac{a+b-1}{2a-1}
    	\end{bmatrix*}$, a real symmetric matrix. Hence $C$ is diagonalizable.
\end{proof}

\begin{remark}\label{rem-3}

Let $P(\lambda)=A_1\lambda+A_0$ be an $n\times n$ matrix polynomial with $n\geq 3$. If one 
of $A_1$ or $A_0$ is a doubly stochastic matrix which is not a permutation matrix, then 
$C$ need not be diagonalizable. For example consider $P(\lambda)=\begin{bmatrix*}[c]
       		1 & 0 & 0 \\
       		0 & 1 & 0 \\
       		0 & 0 & 1
       	\end{bmatrix*}\lambda+\begin{bmatrix*}[c]
       	\frac{1}{8} & \frac{1}{2} & \frac{3}{8} \\
       	\frac{1}{4} & \frac{3}{8} & \frac{3}{8} \\
       	\frac{5}{8} & \frac{1}{8} & \frac{1}{4}
       \end{bmatrix*}$. We can check that $C$ is not diagonalizable. 
\end{remark}

\medskip
\noindent
Let us now prove that the block companion matrix of a quadratic matrix 
polynomial with $2 \times 2$ doubly stochastic coefficients is diagonalizable. We make use 
of Theorem \ref{doubly-stochastic-thm-1-2} to prove this.
    
\begin{theorem}\label{doubly-stochastic-thm-5}
Let $P(\lambda) = A_2\lambda^2 + A_1 \lambda + A_0$ where, $A_2, A_0$ are $2 \times 2$ 
permutation matrices and $A_1$ is a $2 \times 2$ doubly stochastic matrix. Then the 
corresponding block companion matrix $C$ of $P(\lambda)$ is diagonalizable.
\end{theorem}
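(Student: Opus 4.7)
The plan is to mimic the structure of the proof of Theorem \ref{unitary-thm-1}: simultaneously diagonalize the coefficients of the monic reformulation by a single unitary, reduce to the scalar case, show each resulting quadratic scalar polynomial has two distinct roots, and then invoke the block Vandermonde construction from Section \ref{sec-2}. The key observation that makes this work is that in the $2 \times 2$ setting all coefficients lie in the commutative algebra spanned by $I$ and $J = \left[\begin{smallmatrix} 0 & 1 \\ 1 & 0 \end{smallmatrix}\right]$.

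First I would note that the only $2 \times 2$ permutation matrices are $I$ and $J$, so $A_2, A_0 \in \{I, J\}$, and that any $2 \times 2$ doubly stochastic matrix has the form $A_1 = aI + (1-a)J$ for some $a \in [0,1]$. Since $A_2 \in \{I, J\}$ and $A_2^{-1} = A_2$, the coefficients of the associated monic matrix polynomial $P_U(\lambda) = I\lambda^2 + U_1 \lambda + U_0$, with $U_1 = A_2^{-1} A_1$ and $U_0 = A_2^{-1} A_0$, both lie in $\mathrm{span}\{I, J\}$. Hence $U_0$ and $U_1$ commute, and since $I$ and $J$ are Hermitian they are simultaneously diagonalized by the Hadamard matrix $W = \tfrac{1}{\sqrt{2}} \left[\begin{smallmatrix} 1 & 1 \\ 1 & -1 \end{smallmatrix}\right]$. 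Setting $D_1 = W U_1 W^*$ and $D_0 = W U_0 W^*$, both are real diagonal, and exactly as in the proof of Theorem \ref{unitary-thm-1} the block companion matrix $C$ of $P(\lambda)$ is unitarily similar to the block companion matrix of $Q(\lambda) := I \lambda^2 + D_1 \lambda + D_0$.

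The main step is to show that each scalar quadratic $f_{ii}(\lambda) = \lambda^2 + a_{ii} \lambda + b_{ii}$ appearing on the diagonal of $Q(\lambda)$ has two distinct roots. Here the eigenvalue localization of Theorem \ref{doubly-stochastic-thm-1-2} takes the place of the unitary bound used in Theorem \ref{unitary-thm-1}: the hypotheses on $A_2, A_1, A_0$ exactly match the assumptions of that theorem, so every eigenvalue $\lambda_0$ of $P(\lambda)$ satisfies $\tfrac{1}{2} < |\lambda_0| < 2$. On the other hand, a direct computation from $W I W^* = I$ and $W J W^* = \mathrm{diag}(1, -1)$ shows that the diagonal entries $a_{ii}$ of $D_1$ satisfy $|a_{ii}| \leq 1$. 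If some $f_{ii}$ had a repeated root $\lambda_0$, comparing coefficients in $f_{ii}(\lambda) = (\lambda - \lambda_0)^2$ would give $a_{ii} = -2\lambda_0$, hence $|\lambda_0| = |a_{ii}|/2 \leq 1/2$, contradicting the strict lower bound $|\lambda_0| > 1/2$ from Theorem \ref{doubly-stochastic-thm-1-2}.

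With distinct roots $\lambda_i \neq \mu_i$ of $f_{ii}$ in hand, define $X_1 = \mathrm{diag}(\lambda_1, \lambda_2)$ and $X_2 = \mathrm{diag}(\mu_1, \mu_2)$. These are independent (diagonal) solvents of $X^2 + D_1 X + D_0 = 0$, so by the block Vandermonde construction recalled in Section \ref{sec-2} the block companion matrix of $Q(\lambda)$ is similar to $\mathrm{diag}(X_1, X_2)$, which is already diagonal. Transporting back through the unitary $W \oplus W$ yields diagonalizability of $C$. The only place where something could fail is the distinct-roots step, and I expect that to be the main obstacle to overcome; the eigenvalue bound in Theorem \ref{doubly-stochastic-thm-1-2} handles it cleanly, but note that this is precisely why the hypothesis that $A_2$ and $A_0$ be permutation matrices (not merely doubly stochastic) is essential, since without it the bound $|\lambda_0| > 1/2$ fails (cf.\ Remark \ref{rem-2}).
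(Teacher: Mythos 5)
Your proposal is correct, and it reaches the conclusion by a genuinely different final mechanism than the paper, even though the computational core coincides. The paper's proof splits into three cases: it handles $A_2=A_1=A_0=I$ and $A_2=A_1=A_0=I'$ separately by exhibiting explicit eigenvectors, and in the general case it computes $\det P_U(\lambda)=(\lambda^2+\lambda+1)(\lambda^2+(2a-1)\lambda+(2b-1))$, shows each factor has two distinct roots (the second via the same appeal to Theorem \ref{doubly-stochastic-thm-1-2} that you make), shows the two factors share no root, and concludes diagonalizability from $C$ having four \emph{distinct} eigenvalues. Your route instead simultaneously diagonalizes $U_0,U_1\in\mathrm{span}\{I,J\}$ by the Hadamard matrix and then invokes the independent-solvent/block-Vandermonde construction exactly as in Theorem \ref{unitary-thm-1}; the two scalar quadratics you obtain are precisely the paper's two factors. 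What your approach buys is uniformity: since the Vandermonde argument only needs each $f_{ii}$ to have distinct roots (so that $\det(X_1-X_2)\neq 0$), you never need the two factors to be coprime, which is exactly the point where the paper's count-distinct-eigenvalues argument breaks down when $a=b=1$ and forces its separate Cases 1 and 2. The paper's Case 3, in exchange, is more elementary once the determinant is computed, needing no solvent machinery. One small point worth making explicit in a final write-up: a repeated root of $f_{ii}$ is an eigenvalue of $Q$, hence of $P_U$ and of $P$ (the determinants agree up to the nonzero factor $\det A_2$), which is what licenses the appeal to the bound $|\lambda_0|>\tfrac12$; you use this implicitly and it is true, but it deserves a sentence.
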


\begin{proof}
The proof involves three cases.

\medskip
\noindent
Case 1: Suppose $A_2 = A_1 = A_0 = I$.
Then $P(\lambda)$ can be written as, $P(\lambda) = \begin{bmatrix*}[c]
    		\lambda^2 + \lambda + 1 & 0 \\
    		0 & \lambda^2 + \lambda + 1
    	\end{bmatrix*}$. Therefore $\frac{-1 + i\sqrt{3}}{2}$ and 
$\frac{-1 - i\sqrt{3}}{2}$ 
are eigenvalues of $P(\lambda)$ and hence of $C$, of multiplicity $2$ each.
We can check that $[1,0]^t$ and $[0,1]^t$ are eigenvectors of $P(\lambda)$ 
corresponding to both the eigenvalues $\frac{-1 + i\sqrt{3}}{2}$
and $\frac{-1 - i\sqrt{3}}{2}$. 
Therefore $u_1 = [1, 0, \frac{-1 + i\sqrt{3}}{2}, 0]^t, 
\ u_2 = [0, 1, 0, \frac{-1 + i\sqrt{3}}{2}]^t, \ 
u_3 = [1, 0, \frac{-1 - i\sqrt{3}}{2}, 0]^t$ and 
$u_4 = [0, 1, 0, \frac{-1 - i\sqrt{3}}{2}]^t$ are linearly independent eigenvectors 
of $C$. This proves diagonalizability of $C$.
    	
\medskip
\noindent
Case 2: If $A_2 = A_1 = A_0 = I^{\prime}$, where $I^{\prime} = 
\begin{bmatrix}
		0 & 1 \\
   		1 & 0
\end{bmatrix}$. Then the monic matrix polynomial corresponding to
$P(\lambda)$ is $P_U(\lambda) = I \lambda^2 + I \lambda + I$. Hence by Case 1, 
$C$ is diagonalizable.
    	
\medskip
\noindent
Case 3: Consider the corresponding monic matrix polynomial, $P_U(\lambda) = I \lambda^2 
+ B_1 \lambda + B_0$ where, $B_1 = A_2^{-1}A_1$ is a doubly stochastic matrix and 
$B_0 = A_2^{-1}A_0$ is permutation matrix. Let $B_1 = 
\begin{bmatrix*}[c]
      a & 1-a \\
     1-a & a
\end{bmatrix*}$ and $B_0 = \begin{bmatrix*}[c]
    						b & 1-b \\
    						1-b & b
    					   \end{bmatrix*}$ where, $0 \leq a,b \leq 1$. Then
$P_U(\lambda) = \begin{bmatrix*}[c]
    		\lambda^2 + a \lambda + b & (1-a)\lambda + (1-b)\\
    		(1-a)\lambda+(1-b) & \lambda^2+a\lambda+b
    	\end{bmatrix*}$ and $\text{det}P_U(\lambda) = (\lambda^2 + \lambda+1)
(\lambda^2 + (2a-1) \lambda + (2b-1))$. Note that $\lambda^2 + \lambda + 1 
\neq \lambda^2 + (2a-1) \lambda + (2b-1)$. Otherwise $a = b = 1$ which then will 
imply that $A_2 = A_1 = A_0 = I$ or $A_2 = A_1 = A_0 = I^{\prime}$. Moreover, 
since both $\lambda^2 + \lambda + 1$ and $\lambda^2 + (2a-1)\lambda + (2b-1)$ are 
real polynomials they do not have common roots. Now we claim that 
$\lambda^2 + (2a-1)\lambda + (2b-1)$ has two distinct roots. Suppose there is only 
one root, say, $\lambda_0$. Then we have $\lambda^2 + (2a-1)\lambda + (2b-1) = 
(\lambda - \lambda_0)^2 = \lambda^2 - 2\lambda_0 \lambda + \lambda_0^2$. Comparing 
the coefficients, we get $2a-1 = -2\lambda_0$. Since $0\leq a\leq 1$, we have 
$-1 \leq 2a-1 \leq 1$. This implies $2|\lambda_0| = |2a-1| \leq 1$. 
Therefore $|\lambda_0| \leq \frac{1}{2}$, a contradiction to 
Theorem \ref{doubly-stochastic-thm-1-2}. Thus $\lambda^2 + (2a-1)\lambda + (2b-1)$ 
has two distinct roots. Since $\lambda^2 + \lambda + 1$ also has two distinct roots, 
$P(\lambda)$ and hence $C$ has four distinct eigenvalues. Hence $C$ is diagonalizable.
\end{proof}

The following remarks justify the assumptions made in the above theorem.

\begin{remark}\label{rem-4}
\
\begin{enumerate}
\item In the above theorem, if the leading coefficient or the constant term (or both) is a doubly stochastic 
matrix, but not a permutation matrix, then the corresponding block companion matrix need not be 
diagonalizable. For example, consider $P(\lambda) = \begin{bmatrix}
													\frac{11}{24} & \frac{13}{24}\\
													\frac{13}{24} & \frac{11}{24}
													\end{bmatrix} \lambda^2 + 
\begin{bmatrix}
\frac{1}{4} & \frac{3}{4}\\
\frac{3}{4} & \frac{1}{4}
\end{bmatrix} \lambda + \begin{bmatrix}
											\frac{1}{8} & \frac{7}{8}\\
											\frac{7}{8} & \frac{1}{8}
											\end{bmatrix}$. We can check that the 
corresponding block companion matrix is not diagonalizable. One can also look at the matrix polynomial 
$P(\lambda) = \begin{bmatrix}
						  1 & 0\\
						  0 & 1
					   \end{bmatrix} \lambda^2 + \begin{bmatrix}
					   													 \frac{1}{2} & \frac{1}{2}\\
					   													 \frac{1}{2} & \frac{1}{2}  
					   											       \end{bmatrix} \lambda + 
\begin{bmatrix}
\frac{1}{2} & \frac{1}{2}\\
\frac{1}{2} & \frac{1}{2}
\end{bmatrix}$. 

\item If the size of a quadratic matrix polynomial $P(\lambda)$ is greater than 
$2$ then the corresponding block companion matrix $C$ need not be diagonalizable even 
when all the coefficients are permutation matrices (see the Example in 
Remark \ref{rem-1}). Note that the coefficients of the matrix polynomial in that example 
are non-commuting permutation matrices. However when the coefficients of $P(\lambda)$ 
are commuting permutation matrices the corresponding block companion matrix $C$ is 
diagonalizable, as already proved in Theorem \ref{unitary-thm-1}.

\item Let $P(\lambda) = A_2 \lambda^2 + A_1\lambda + A_0$ be an $n \times n$ 
matrix polynomial with $n \geq 3$. If one of $A_2, A_1, A_0$ is a doubly stochastic 
matrix which is not permutation, then the corresponding block companion matrix $C$ 
need not be diagonalizable even if the coefficients commute. For example consider \\
$P(\lambda) = \begin{bmatrix*}[c]
    			1 & 0 & 0 \\
    			0 & 1 & 0 \\
    			0 & 0 & 1
    		\end{bmatrix*}\lambda^2 + \begin{bmatrix*}[c]
    			\frac{5}{12} & \frac{5}{12} & \frac{1}{6} \\
    			\frac{1}{4} & \frac{1}{4} & \frac{1}{2} \\
    			\frac{1}{3} & \frac{1}{3} & \frac{1}{3}
    		\end{bmatrix*}\lambda + 
    		\begin{bmatrix*}[c]
    			1 & 0 & 0 \\
    			0 & 1 & 0 \\
    			0 & 0 & 1
    		\end{bmatrix*}$. We can check that the corresponding block companion matrix 
of $P(\lambda)$ is not diagonalizable. 
\item Let $P(\lambda)$ be a matrix polynomial of degree greater than $2$. Then the 
corresponding block companion matrix need not be diagonalizable even with coefficients 
of $P(\lambda)$ being commuting permutation matrices.
For example consider $P(\lambda) = 
    		\begin{bmatrix*}[c]
    			1 & 0 \\
    			0 & 1
    		\end{bmatrix*}\lambda^3 + \begin{bmatrix*}[c]
    			0 & 1 \\
    			1 & 0
    		\end{bmatrix*}\lambda^2 + 
    		\begin{bmatrix*}[c]
    			0 & 1\\
    			1 & 0
    		\end{bmatrix*}\lambda + 
    		\begin{bmatrix*}[c]
    			1 & 0 \\
    			0 & 1
    		\end{bmatrix*}$. The coefficients are commuting permutation matrices. 
However, the corresponding block companion matrix is non-diagonalizable.
\end{enumerate}
\end{remark}

We end this section by pointing out that the Hoffman-Wielandt type inequality for matrix 
polynomials with doubly stochastic coefficients can be derived as in the unitary case. 
For the sake of completeness, we state below only the quadratic polynomials version 
and skip the proof. 
    
\begin{theorem}\label{HW-thm-2}
Let $P$ and $Q$ be quadratic matrix polynomials of same size, where $P$ satisfies 
conditions of Theorem \ref{doubly-stochastic-thm-5}. If $C$ and $D$ are 
the corresponding block companion matrices, then there exists a permutation $\pi$ of the 
indices $1,\ldots, 4$ such that 
$\displaystyle \sum_{i=1}^{4} |\alpha_i - \beta_{\pi(i)}|^2 
\leq ||X||^2_2 ||X^{-1}||^2_2 ||C-D||^2_F$, where $\{\alpha_i\}$ and 
$\{\beta_i\}$ are the eigenvalues of $C$ and $D$ respectively, and $X$ is a nonsingular 
matrix whose columns are the eigenvectors of $C$.
\end{theorem}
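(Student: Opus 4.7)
The plan is to mirror the proof of Theorem \ref{HW-thm-1} almost verbatim, since the only substantive input needed is that the block companion matrix $C$ of $P$ is diagonalizable; once we have this, Theorem \ref{starting theorem} (the Ikramov--Nesterenko inequality for a diagonalizable matrix versus an arbitrary one) supplies the conclusion with no further work.

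First, I would invoke Theorem \ref{doubly-stochastic-thm-5}: by hypothesis $P$ is a quadratic matrix polynomial with $A_2$ and $A_0$ being $2\times 2$ permutation matrices and $A_1$ a $2 \times 2$ doubly stochastic matrix, so that theorem guarantees that the $4\times 4$ block companion matrix $C$ of $P$ is diagonalizable. Consequently, there exists a nonsingular $4 \times 4$ matrix $X$ whose columns form a basis of $\mathbb{C}^{4}$ consisting of eigenvectors of $C$.

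Next, with $C$ diagonalizable and $D$ (the block companion matrix of $Q$) treated as an arbitrary $4\times 4$ matrix, I apply Theorem \ref{starting theorem} with $n=4$, $A = C$, $B=D$, and the matrix of eigenvectors equal to $X$. This yields a permutation $\pi$ of $\{1,2,3,4\}$ such that
\begin{equation*}
\sum_{i=1}^{4} |\alpha_i - \beta_{\pi(i)}|^2 \;\leq\; \|X\|_2^2\, \|X^{-1}\|_2^2\, \|C-D\|_F^2,
\end{equation*}
which is precisely the desired inequality. Note that the sizes match: $P$ and $Q$ are $2 \times 2$ quadratic matrix polynomials, so their block companion matrices are both of order $2 \cdot 2 = 4$, and hence $\|C-D\|_F$ and the permutation $\pi$ are well-defined.

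There is no real obstacle here; the nontrivial content is entirely contained in Theorem \ref{doubly-stochastic-thm-5} (which establishes diagonalizability via a case analysis using the eigenvalue bound from Theorem \ref{doubly-stochastic-thm-1-2}) and Theorem \ref{starting theorem}. The only minor point worth mentioning explicitly in the write-up is that the hypothesis of Theorem \ref{doubly-stochastic-thm-5} is imposed only on $P$, while $Q$ is allowed to be any quadratic matrix polynomial of the same size, consistent with the asymmetric role of $A$ and $B$ in Theorem \ref{starting theorem}.
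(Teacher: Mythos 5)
Your proposal is correct and follows exactly the route the paper intends: Theorem \ref{doubly-stochastic-thm-5} gives diagonalizability of $C$, and Theorem \ref{starting theorem} then yields the inequality, which is precisely how the paper proves the analogous unitary case (Theorem \ref{HW-thm-1}) and why it skips this proof. No gaps.
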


\subsection{Estimation of the spectral condition number}\label{sec-4}\hspace*{\fill}

\medskip

Computing/estimating the condition number of an invertible matrix can be quite hard. 
Notice that a diagonalizable matrix can be diagonalized through
more than one matrix and the spectral condition number of these matrices need not be 
the same. We also know from Theorems \ref{HW-thm-1} and \ref{HW-thm-2} that diagonalization 
of the block companion matrix of certain matrix polynomials is achieved through a particular 
block Vandermonde matrix $V$. This gives us some hope to estimate the spectral condition 
in the above mentioned theorems. We set out to do this in this section. 

\medskip
\subsubsection{Condition number of matrix $X$ that appears in Theorem \ref{HW-thm-1}}\hspace*{\fill}

\medskip

We first prove that the spectral condition number of the block Vandermonde matrix $V$ 
obtained in Theorem \ref{unitary-thm-1} is less than $2$. 

\begin{theorem}\label{thm-spectral condition number-1}
Let $V$ be the block Vandermonde matrix obtained in Theorem \ref{unitary-thm-1}. 
Then $\kappa(V)< 2$.
\end{theorem}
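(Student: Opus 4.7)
\medskip
\noindent\textbf{Proof plan.} My plan is to exploit the very special form of $V$ to reduce the problem to a uniform estimate on a family of $2 \times 2$ matrices. Because $X_1 = \operatorname{diag}(\lambda_1, \ldots, \lambda_n)$ and $X_2 = \operatorname{diag}(\mu_1, \ldots, \mu_n)$ are diagonal, a suitable permutation of rows and columns transforms $V$ into the block-diagonal matrix $V_1 \oplus V_2 \oplus \cdots \oplus V_n$, where
\[
V_i = \begin{bmatrix} 1 & 1 \\ \lambda_i & \mu_i \end{bmatrix}, \qquad 1 \leq i \leq n.
\]
Since permutation matrices are orthogonal, this preserves singular values, so $\sigma_{\max}(V) = \max_i \sigma_{\max}(V_i)$ and $\sigma_{\min}(V) = \min_i \sigma_{\min}(V_i)$. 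It therefore suffices to produce uniform constants $M$ and $m$ with $\sigma_{\max}(V_i)^2 \leq M$ and $\sigma_{\min}(V_i)^2 \geq m$ for every $i$, and with $M/m < 4$.

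Next I would convert the unitarity hypothesis on $U_0$ and $U_1$ into constraints on each pair $(\lambda_i, \mu_i)$. Since $D_0$ and $D_1$ are unitary, $|a_{ii}| = |b_{ii}| = 1$, and Vieta's formulas applied to $f_{ii}(\lambda) = \lambda^2 + a_{ii}\lambda + b_{ii}$ give $|\lambda_i + \mu_i| = 1$ and $|\lambda_i \mu_i| = 1$. Writing $T_i := |\lambda_i|^2 + |\mu_i|^2$, the identity $|\lambda_i + \mu_i|^2 = 1$ expands as $T_i + 2\operatorname{Re}(\bar{\lambda}_i \mu_i) = 1$; since $|\bar{\lambda}_i \mu_i| = 1$ while AM--GM gives $T_i \geq 2$, one obtains $T_i \in [2, 3]$. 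A short computation then yields $\operatorname{tr}(V_i^* V_i) = 2 + T_i$ and $\det(V_i^* V_i) = |\mu_i - \lambda_i|^2 = 2T_i - 1$, so the two squared singular values of $V_i$ are the roots of
\[
t^2 - (2 + T_i)\, t + (2T_i - 1) = 0, \qquad \text{i.e.,} \qquad \sigma_{\pm}(V_i)^2 = \frac{(2 + T_i) \pm \sqrt{T_i^2 - 4T_i + 8}}{2}.
\]

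Finally, a short derivative check shows that both branches of this quadratic are strictly increasing in $T_i$ on $[2,3]$. Hence $\sigma_{\max}(V_i)^2$ is maximized at $T_i = 3$ with value $(5 + \sqrt{5})/2$, while $\sigma_{\min}(V_i)^2$ is minimized at $T_i = 2$ with value $1$. Taking the worst case over indices (which may differ for numerator and denominator) therefore yields
\[
\kappa(V)^2 \leq \frac{5 + \sqrt{5}}{2} \approx 3.618 < 4,
\]
and so $\kappa(V) < 2$, as desired. I do not foresee any substantive obstacle: the only thing to watch is that the indices realizing the maximum of $\sigma_{\max}(V_i)$ and the minimum of $\sigma_{\min}(V_i)$ need not coincide, but the uniformity of the bounds in $i$ makes this irrelevant. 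The real content of the argument is the collapse to a one-parameter family in $T_i \in [2,3]$, which is forced by the two Vieta identities coming from unitarity.
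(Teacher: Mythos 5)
Your proof is correct, and it arrives at exactly the same two quantitative facts as the paper --- the bound $2 \le T_i := |\lambda_i|^2+|\mu_i|^2 \le 3$ and the quadratic $t^2-(2+T_i)t+(2T_i-1)$ whose roots are the squared singular values --- but by a cleaner route. The paper computes $\det(VV^*-\lambda I)$ for the full $2n\times 2n$ matrix using a block elimination matrix $L$ and a Schur-complement-style determinant identity, and then locates the roots $\alpha_i,\beta_i$ by contradiction (showing $\alpha_i<1$ would force $T_i<2$ and $\beta_i\ge 4$ would force $T_i>3$); it also derives $T_i\in[2,3]$ via the parallelogram identity together with $|\lambda_i-\mu_i|^2=|a_{ii}^2-4b_{ii}|\ge 3$ and the reverse triangle inequality. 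You instead observe that a simultaneous row and column permutation decouples $V$ into a direct sum of the $2\times 2$ matrices $V_i$ with rows $(1,1)$ and $(\lambda_i,\mu_i)$, so that the singular values of $V$ are just the union over $i$ of those of the $V_i$, obtainable from $\operatorname{tr}(V_i^*V_i)=2+T_i$ and $\det(V_i^*V_i)=|\mu_i-\lambda_i|^2=2T_i-1$; you get $T_i\in[2,3]$ directly from AM--GM and $\operatorname{Re}(\bar\lambda_i\mu_i)\ge -|\bar\lambda_i\mu_i|$, and you bound the roots by monotonicity in $T_i$ rather than by contradiction. The decoupling step is the genuine structural simplification: it makes the origin of the quadratic transparent and sidesteps the $2n\times 2n$ determinant manipulation entirely. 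As a bonus, your argument yields the slightly sharper explicit estimate $\kappa(V)\le\sqrt{(5+\sqrt 5)/2}\approx 1.902$, whereas the paper only records $\sigma_{\min}\ge 1$ and $\sigma_{\max}<2$. All the individual steps (the permutation similarity preserving singular values, the Vieta identities, the sign of the derivative of both root branches) check out.
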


\begin{proof}
From Theorem \ref{unitary-thm-1} we have 
$V = \begin{bmatrix}
I & I \\
X_1 & X_2
\end{bmatrix}$ with $X_1 = \text{diag}(\lambda_1,\dots,\lambda_n)$ and 
$X_2=\text{diag}(\mu_1\dots,\mu_n)$, where $\lambda_i$ and $\mu_i$ are the distinct 
roots of $f_{ii}(\lambda) = \lambda^2 + a_{ii}\lambda + b_{ii}$ for $i = 1, \dots, n$ 
and $a_{ii}, b_{ii}$ are the eigenvalues of $U_1$, $U_0$ respectively. We thus have
$|\lambda_i + \mu_i| = 1$ and $|\lambda_i\mu_i| = 1$ for $i = 1, \dots, n$. It is then 
easy to show that $(\lambda_i-\mu_i)^2 = a_{ii}^2-4b_{ii}$. Thus, 
$|\lambda_i-\mu_i|^2=|a_{ii}^2-4b_{ii}|\geq ||a_{ii}^2|-4|b_{ii}||=|1-4|=3$. Using the 
parallelogram identity we have, 
$2(|\lambda_i|^2 + |\mu_i|^2) = |\lambda_i + \mu_i|^2 + |\lambda_i - \mu_i|^2 \geq 1+3 = 4$.
This implies that $|\lambda_i|^2 + |\mu_i|^2 \geq 2 $. 

Since $\big| |\lambda_i|-|\mu_i| \big| \leq |\lambda_i+\mu_i| = 1$, we have
$1 \geq (|\lambda_i| - |\mu_i|)^2 = |\lambda_i|^2 + |\mu_i|^2 - 2|\lambda_i| |\mu_i|$. 
Therefore $|\lambda_i|^2 + |\mu_i|^2 \leq 3$. We thus have proved the following:\\
\begin{equation}\label{eqn-norm}
2 \leq |\lambda_i|^2 + |\mu_i|^2 \leq 3, \hspace{0.5cm} i = 1, \ldots, n. 
\end{equation}

We know that $\kappa(V) = \displaystyle \frac{\sigma_{\text{max}}}{\sigma_{\text{min}}}$, 
where $\sigma_{\text{min}}$ and $\sigma_{\text{max}}$ are the smallest and the largest 
singular values of $V$ respectively. Since the singular values of $V$ are the positive 
square roots of the eigenvalues of $VV^*$, we estimate bounds for these eigenvalues.  

\medskip
Define $L:= \begin{bmatrix}
I & 0 \\
-(X_1+X_2)(2I- I \lambda)^{-1} & I
\end{bmatrix}$. Notice that $L$ is a matrix with $\text{det} = 1$. Let us now compute the matrix 
$L(VV^*-I \lambda) = \\
\begin{bmatrix}
2I -I\lambda  & X_1^*+X_2^* \\
0 & -(X_1+X_2)(2I-I\lambda )^{-1}(X_1^*+X_2^*) + X_1X_1^*+X_2X_2^*-I\lambda 
\end{bmatrix}$. Since $X_1$ and $X_2$ are diagonal matrices,
$\text{det}(VV^*-I\lambda ) 
= \text{det}\big(L(VV^*-I\lambda )\big) 
= \text{det}(2I-I\lambda )\big(-(X_1+X_2)(2I-I\lambda )^{-1}(X_1^*+X_2^*)+
(X_1X_1^*+X_2X_2^*-I\lambda )\big)
= \text{det}\big(I \lambda^2 - (2+X_1X_1^*+X_2X_2^*) \lambda+(X_1X_1^*+X_2X_2^*-X_1X_2^*
-X_2X_1^*)\big) = \prod_{i=1}^{n} \big(\lambda^2-(2+|\lambda_i|^2+|\mu_i|^2)\lambda +
(|\lambda_i|^2 + |\mu_i|^2 -\lambda_i\bar{\mu}_i-\mu_i\bar{\lambda}_i)\big)$.  
Thus, in order to compute the eigenvalues of $VV^*$, it suffices to determine the roots 
of the polynomials 
$\lambda^2-(2+|\lambda_i|^2+|\mu_i|^2)\lambda+(|\lambda_i|^2+|\mu_i|^2-\lambda_i
\bar{\mu}_i-\mu_i\bar{\lambda}_i)$. These are given by \\
$\alpha_i=\displaystyle \frac{(2+|\lambda_i|^2+|\mu_i|^2) - \sqrt{(2-(|\lambda_i|^2+
|\mu_i|^2))^2+4}}{2}$ and \\
$\beta_i=\displaystyle \frac{(2+|\lambda_i|^2+|\mu_i|^2) + \sqrt{(2-(|\lambda_i|^2+
|\mu_i|^2))^2+4}}{2}$, for $i = 1, \dots, n$.

Note that $\alpha_i \leq \beta_i$ as the term inside the square root symbol is positive. 
We now claim that $1 \leq \alpha_i \leq \beta_i < 4$ for all $i = 1, \dots, n$. 
Suppose on contrary, $\alpha_i < 1$ for some $i = 1, \dots, n$. Then we have,
\begin{align*}
& \displaystyle \frac{(2+|\lambda_i|^2+|\mu_i|^2)-\sqrt{(2-(|\lambda_i|^2+|\mu_i|^2))^2
+4}}{2} < 1 \\
\implies & (2+|\lambda_i|^2+|\mu_i|^2)-\sqrt{(2-(|\lambda_i|^2+|\mu_i|^2))^2+4}  < 2 \\
\implies & (|\lambda_i|^2+|\mu_i|^2)^2 < (2-(|\lambda_i|^2+|\mu_i|^2))^2+4 \\
\implies & (|\lambda_i|^2+|\mu_i|^2)^2 < 4 + (|\lambda_i|^2+|\mu_i|^2)^2 - 4(|\lambda_i|^2
+|\mu_i|^2)+4\\
\implies &  |\lambda_i|^2+|\mu_i|^2  < 2,
\end{align*}
a contradiction to the inequality \eqref{eqn-norm}. Therefore we have 
$1 \leq \alpha_i$ for all $i = 1, \dots, n$. Similarly, if $4 \leq \beta_i$ for some 
$i = 1, \dots, n$, then we have $3 < |\lambda_i|^2 + |\mu_i|^2$ which is again a contradiction 
to the inequality \eqref{eqn-norm}. Therefore $\beta_i < 4$ for all $i = 1, \dots, n$. We 
thus have $1 \leq \alpha_i \leq \beta_i < 4$, which implies that 
$1 \leq \sigma_{\text{min}}$ and $\sigma_{\text{max}} < 2$. Therefore
$\kappa(V) = \displaystyle \frac{\sigma_{\text{max}}}{\sigma_{\text{min}}} < 2$.
\end{proof}

As mentioned in the Remark \ref{rem-1}, the matrix which diagonalizes the block companion 
matrix $C$ is $X = UV$, where $U$ is a unitary matrix. Since the spectral condition number 
is unitarily invariant, we have $\kappa(X) = \kappa(V) < 2$. Thus in Theorem \ref{HW-thm-1} 
we have $||X||^2_2||X^{-1}||^2_2 < 4$.

\medskip
\subsubsection{Condition number of matrix $X$ obtained in Theorem \ref{HW-thm-1.1}}
\hspace*{\fill}

\medskip
 
In parts $(1)$ and $(2)$ of the Theorem \ref{thm-1} the block companion matrices  
are unitary and diagonal matrices respectively. Hence both are diagonalizable through 
unitary matrices, whose spectral condition number is $1$.

In part (3) of Theorem \ref{thm-1}, the block companion matrix is
$C = -A_1^{-1}A_0 = A_1^{-1/2} \Big(-A_1^{-1/2} A_0 A_1^{-1/2}\Big) A_1^{1/2}$, where
$-A_1^{-1/2} A_0 A_1^{-1/2}$ is a Hermitian matrix, which is diagonalizable
through a unitary matrix, say, $U$. Hence, $C = A_1^{-1/2} U^{-1} D U A_1^{1/2}$, where
$D$ is a diagonal matrix. Thus $C$ is diagonalizable through matrix $X = U A_1^{1/2}$,
where $A_1^{1/2}$ is a Hermitian positive definite matrix. Thus $\kappa(X) = 
\kappa(A_1^{1/2})$. Since $A_1^{1/2}$ is Hermitian positive definite, 
$\kappa(A_1^{1/2}) = \displaystyle \frac{\lambda_{\text{max}}}{\lambda_{\text{min}}}$, 
where $\lambda_{\text{max}}$ and $\lambda_{\text{min}}$ are respectively the maximum and the minimum eigenvalues of $A_1^{1/2}$. Therefore $\kappa(X) = 
\displaystyle \frac{\lambda_{\text{max}}}{\lambda_{\text{min}}}$.

\medskip
\subsubsection{Condition number of matrix $X$ obtained in Theorem \ref{HW-thm-2}}
\hspace*{\fill}

\medskip

We again discuss two cases with reference to Theorem \ref{doubly-stochastic-thm-5}.

\begin{itemize}
\item [(1)] If $A_2 = A_1 = A_0 = I$ or $A_2 = A_1 = A_0 =I^{\prime}$ where $I$ is 
the identity matrix and $I^{\prime} = 
\begin{bmatrix}
0 & 1 \\
1 & 0
\end{bmatrix}$, the coefficients are unitary matrices and this case reduces to the one 
discussed above. Thus, $\kappa(X) < 2$.

\item[(2)] In the general case, since the coefficients are of size $2\times 2$ one can 
easily verify that $v_1 = \begin{bmatrix}
\frac{-1+i\sqrt{3}}{2} & \frac{-1+i\sqrt{3}}{2} & 1 & 1
\end{bmatrix}^t , v_2 = \begin{bmatrix}
\frac{2i}{\sqrt{3}+i} & \frac{2i}{\sqrt{3}+i} & 1 & 1
\end{bmatrix}^t, v_3=\begin{bmatrix}
\frac{2}{(2a-1)+\sqrt{4a^2-4a-8b+5}} & -\frac{2}{(2a-1)+\sqrt{4a^2-4a-8b+5}} & -1 & 1
\end{bmatrix}^t$ and \\
$v_4 =\begin{bmatrix}
\frac{2}{(2a-1)-\sqrt{4a^2-4a-8b+5}} & \frac{2}{(1-2a)+\sqrt{4a^2-4a-8b+5}} & -1 & 1
\end{bmatrix}^t$ are linearly independent eigenvectors of the block companion matrix $C$.
We can thus choose $X = \begin{bmatrix}
v_1 & v_2 & v_3 & v_4
\end{bmatrix}$.
\end{itemize}

\section{Concluding Remarks}

We have discussed diagonalizability of the block companion matrix of matrix polynomials 
under certain assumptions on the coefficient matrices. As a consequence, we derive the 
Hoffman-Wielandt type inequality for the block companion matrices of such matrix 
polynomials. Besides these, the spectral condition number of the matrix $X$ which appears 
in these inequalities is also estimated. 

\medskip
\noindent
{\bf Acknowledgements:} Pallavi .B and Shrinath Hadimani acknowledge the Council of 
Scientific and Industrial Research (CSIR) and the University Grants Commission (UGC), 
Government of India, for financial support through research fellowships.

\bibliographystyle{amsplain}

\end{document}